\newtheorem{theorem}{Theorem}[section]
\newtheorem{lemma}{Lemma}[section]
\newtheorem{corollary}{Corollary}[section]
\newtheorem{assumption}{Assumption}
\theoremstyle{definition}
\newtheorem{definition}{Definition}[section]
\newtheorem{example}{Example}[section]
\theoremstyle{definition}
\begin{document}

\begin{center}\Large
Stratonovich-type integral with respect to a general stochastic measure \footnote{To appear in {\em Stochastics: An International Journal of Probability and Stochastic Processes}}
\end{center}

\begin{center}
Vadym Radchenko \footnote{Department of Mathematical Analysis, Taras Shevchenko National University of Kyiv, Kyiv 01601, Ukraine.
\emph{E-mail adddress}: {vradchenko@univ.kiev.ua}}
\end{center}

%\emph{2010 Mathematics Subject Classication}: 60H05; 60G57; 60H10; 60G17

%\emph{Keywords}: stochastic measure; Stratonovich integral; Stochastic integral; Strong cubic variation; Doss--Sussmann transformation

\begin{abstract}
Let $\mu$ be a general stochastic measure, where we assume for $\mu$ only $\sigma$-additivity in probability and continuity of paths. We prove that the symmetric integral $\int_{[0,T]}f(\mu_t, t)\circ\,{\rm d}\mu_t$ is well defined.
For stochastic equations with this integral, we obtain the existence and uniqueness of a solution.
\end{abstract}

\section{Introduction}
\label{scintr}

The main object of this paper is to construct an integral with respect to a general stochastic measure. We will define the integral of the form $\int_{[0,T]}f(\mu_t, t)\circ\,{\rm d}\mu_t$, where $\mu_t=\mu((0,t])$, $\mu$ is the stochastic set function defined on the Borel $\sigma$-algebra of $[0,T]$, and for $\mu$ we assume $\sigma$-additivity in probability and continuity of the paths of $\mu_t$. No additional assumptions will be made for $\mu$ regarding moment existence, path regularity or martingality condition.

We will define the ``symmetric'' integral as the limit in probability of Stratonovich integral sums. This integral is well defined for $f\in {\mathbb C}^{1,1}({\mathbb R}^2)$, and the chain rule formula holds in this case. In the last Section of the paper we will show that the limit of ``non-anticipated'' integral sums does not exist for some $\mu$. Therefore, the definition of It\^{o} type integral may not be used for general stochastic measures. But in some cases, such as stochastic integration on manifolds or Wong-Zakai approximation of stochastic equations, Stratonovich integral is a very useful tool. Stratonovich-type definitions of stochastic integrals with respect to differrent classes of processes may by founded in~\cite{kurtz}, \cite{deya13}, \cite{hu2013}, and \cite{swanson}.

Integrals of deterministic functions with respect to general stochastic measures are well studied, see \cite{kwawoy}, \cite{curbera}, \cite{radmon}.  Stochastic partial differential equations with this integral were considered in~\cite{rads09}, \cite{radc14}.

In order to study a wider class of equations driven by stochastic measures, we need a definition of an integral for random functions. This will be done in this paper. Our approach is similar to~\cite{erru03}. Moreover, under some additional assumptions, we prove that $\mu_t$ is the process with finite strong cubic variation. Further, we define the integral without these additional assumptions.

The paper is organized as follows. In Section~\ref{scprel} we have compiled some basic facts about stochastic measures.
In Section~\ref{scsmcv} we study cubic variation of $\mu_t$. Section~\ref{scstri} is devoted to the definition of our integral. Section~\ref{scsdes} studies existence and uniqueness of a solution to equations driven by stochastic measures. To this aim we apply the Doss-Sussmann transformation. In Section~\ref{sccoun} we give two important counterexamples, where we demonstrate that the finite quadratic variation approach and the ``non-anticipated'' definition of integral can not be applied in our case.

\section{Preliminaries}
\label{scprel}

Let ${\sf L}_0={\sf L}_0(\Omega, {\mathcal F}, {\sf P} )$ be the set of all real-valued
random variables defined on the complete probability space $(\Omega, {\mathcal F}, {\sf P} )$ (more precisely, the set of equivalence classes). Convergence in ${\sf L}_0$ means the convergence in probability. Let ${\sf X}$ be an arbitrary set and ${\mathcal{B}}$ a $\sigma$-algebra of subsets of ${\sf X}$.

\begin{definition}
A $\sigma$-additive mapping $\mu:\ {\mathcal{B}}\to {\sf L}_0$ is called {\em stochastic measure} (SM).
\end{definition}

In other words, $\mu$ is a vector measure with values in ${\sf L}_0$. We do not assume positivity or moment existence
for SM. In~\cite{kwawoy} such $\mu$ is called a general SM. In the sequel, $\mu$ denotes a SM.

For a deterministic measurable function $g:{\sf X}\to{\mathbb R}$ and a SM~$\mu$, an integral of the form $\int_{\sf
X}g\,{\rm d}\mu$ is defined and studied in~\cite[Chapter 7]{kwawoy}, see also~\cite{curbera}, \cite{radmon}. In particular, every bounded measurable $g$ is integrable with respect to any~$\mu$. An analogue of the Lebesgue dominated convergence theorem holds for this integral, see Proposition 7.1.1 of~\cite{kwawoy}.

The following analogue of Nikodym theorem is satisfied for SMs.

\begin{theorem}\cite[Theorem 8.6]{dretop}\label{thniko}
Let $\mu_n$ are SMs on ${\mathcal{B}}$, $n\ge 1$, and
\[
\forall\, {\sf A}\in{\mathcal{B}}\ \exists\ \mu({\sf A})={\rm p}\lim_{n\to\infty} \mu_n ({\sf A}).
\]
Then $\mu$ is a SM on ${\mathcal{B}}$.
\end{theorem}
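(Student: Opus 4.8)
The final statement in the excerpt is Theorem~\ref{thniko}, the Nikodym-type theorem for stochastic measures: if $\mu_n$ are SMs on $\mathcal B$ and the pointwise probability limit $\mu(\mathsf A) = \mathrm{p}\lim_n \mu_n(\mathsf A)$ exists for every $\mathsf A\in\mathcal B$, then $\mu$ is again a SM. Since $\mu$ is by construction finitely additive (probability limits respect finite sums), the whole content is $\sigma$-additivity in probability of the limit: for any decreasing sequence $\mathsf A_k\downarrow\emptyset$ in $\mathcal B$ one needs $\mu(\mathsf A_k)\rp 0$.

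\textbf{Plan of proof.} The paper cites this as \cite[Theorem 8.6]{dretop}, so strictly one may just invoke it; but here is how I would prove it directly. First I would recast everything in terms of a metric for convergence in probability: fix a probability-metrizing functional, e.g. $\rho(\xi) = \mathsf E[\min(|\xi|,1)]$, so that $\mu_n\to\mu$ pointwise on $\mathcal B$ means $\rho(\mu_n(\mathsf A)-\mu(\mathsf A))\to 0$ for each $\mathsf A$, and $\sigma$-additivity of $\mu$ is equivalent to $\rho(\mu(\mathsf A_k))\to 0$ along any $\mathsf A_k\downarrow\emptyset$. The key external input is the classical \emph{Vitali--Hahn--Saks / Nikodym} circle of ideas transported to the group-valued (here $\mathsf L_0$-valued) setting: a pointwise limit of a sequence of $\sigma$-additive measures into a complete metric group, taken over a $\sigma$-algebra, is again $\sigma$-additive. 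The standard route is via the \emph{uniform $\sigma$-additivity} (equi-continuity at $\emptyset$) of the family $\{\mu_n\}$: once one shows $\sup_n \rho(\mu_n(\mathsf A_k))\to 0$ as $k\to\infty$ for every $\mathsf A_k\downarrow\emptyset$, then for such a sequence $\rho(\mu(\mathsf A_k))\le \rho(\mu(\mathsf A_k)-\mu_n(\mathsf A_k)) + \rho(\mu_n(\mathsf A_k))$, and choosing $n$ large (using pointwise convergence at the fixed set $\mathsf A_k$) then $k$ large kills both terms, giving $\sigma$-additivity of $\mu$.

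\textbf{Getting uniform $\sigma$-additivity — the crux.} The main obstacle is precisely establishing this equi-$\sigma$-additivity of $\{\mu_n\}$ from merely pointwise convergence. I would argue by contradiction using a \emph{sliding-hump} (gliding hump) construction combined with a Baire-category argument on the complete metric space obtained by equipping $\mathcal B$ with the Fr\'echet--Nikodym pseudometric $d(\mathsf A,\mathsf B) = \sum_n 2^{-n}\rho(\mu_n(\mathsf A\triangle\mathsf B))$ (finite since each $\mu_n$ is bounded in $\mathsf L_0$, being $\sigma$-additive). On this complete pseudometric space each $\mu_n$ is continuous; the pointwise limit $\mu$ is then of first Baire class, hence continuous on a dense $G_\delta$ — and a standard argument (Nikodym's) upgrades continuity of $\mu$ at one point to continuity everywhere, in particular at $\emptyset$, which is exactly $\sigma$-additivity of $\mu$; a parallel Baire argument yields the uniform version. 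If one prefers to avoid category entirely, the alternative is a direct diagonal/gliding-hump extraction: were $\{\mu_n\}$ not uniformly $\sigma$-additive, one could build disjoint sets $\mathsf E_j$ and indices $n_j$ with $\rho(\mu_{n_j}(\mathsf E_j))\ge\varepsilon$, then use $\sigma$-additivity of each \emph{individual} $\mu_{n_j}$ on the Borel $\sigma$-algebra to pass to a subsequence along which the humps are asymptotically disjoint in effect, and finally evaluate the (existing) limit $\mu$ on $\bigcup_{j\ge J}\mathsf E_j$ to reach a contradiction with finite additivity plus boundedness of $\mu$. I expect the sliding-hump bookkeeping — keeping the ``tails'' of each already-chosen $\mu_{n_i}$ small on the newly chosen $\mathsf E_j$ simultaneously for all $i<j$ — to be the one genuinely delicate step; everything else (finite additivity of $\mu$, the final three-term estimate, measurability issues) is routine.
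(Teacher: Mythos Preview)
The paper does not prove this theorem at all: it is stated with the citation \cite[Theorem 8.6]{dretop} and used as a black box, so there is no ``paper's own proof'' to compare against. You already noticed this, and your decision to sketch a direct argument is reasonable.

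Your outline is the standard one and is essentially correct: reduce to showing uniform $\sigma$-additivity (equicontinuity at $\emptyset$) of the family $\{\mu_n\}$, then pass to the limit. Both routes you describe --- the Baire-category argument on a Fr\'echet--Nikodym-type pseudometric, and the gliding-hump extraction --- are exactly the methods used in the literature for group-valued Nikodym/Vitali--Hahn--Saks theorems; Drewnowski's original proof in \cite{dretop} proceeds in his ``topological rings of sets'' framework, which packages the same ideas. One technical point you should be careful about if you carry this out: completeness of the pseudometric $d(\mathsf A,\mathsf B)=\sum_n 2^{-n}\rho(\mu_n(\mathsf A\triangle\mathsf B))$ on $\mathcal B$ is not automatic and typically requires a control-measure argument (or one works instead with a single finite positive measure dominating the $\mu_n$ in the appropriate sense). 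The gliding-hump alternative avoids this issue and is closer in spirit to Drewnowski's lemma; your identification of the bookkeeping of tails as the delicate step is accurate.
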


Main results of this paper will be proved for $\mu$ defined on the $\sigma$-algebra of Borel subsets of $[0, T]$ under the assumption that the random process $\mu_{t}:=\mu((0,t])$, $0\le t\le T$ has continuous paths.

Examples of SMs on $[0,T]$ are the following:

\begin{enumerate}

\item Let $M_{t}$ be a square integrable martingale. Then $\mu({\sf A})=\int_0^T {\bf 1}_{\sf
A}(t)\,{\rm d}M_{t}$ is a SM.

\item If $W^H_{t}$ is a fractional Brownian motion with Hurst index~$H>1/2$ and $f: [0,
T]\to\mathbb{R}$ is a bounded measurable function then $\mu({\sf A})=\int_0^T f(t){\bf 1}_{\sf A}(t)\,{\rm d}W^H_{t}$ is a SM. This follows from~\cite[Theorem~1.1]{memiva}.

\item An $\alpha$-stable random measure defined on a $\sigma$-algebra is
an SM, see \cite[Chapter 3]{samtaq}.

\item\label{exserm} Let the random series $\sum_{n\ge 1}\xi_n$ converge unconditionally in probability, and $m_n$ are real signed measures on~${\mathcal{B}}$, $|{\sf m}_n({\sf A})|\le 1$. Set $\mu({\sf A})=\sum_{n\ge 1}\xi_n {\sf m}_n({\sf A})$. Convergence of this series in probability follows from~\cite[Theorem~V.4.2]{vahtar}, and $\mu$ is a SM by Theorem~\ref{thniko}.

\end{enumerate}

Theorem~8.3.1 of~\cite{kwawoy} states the conditions under which the increments
of a real-valued process with independent increments generate a SM.

\section{SM has finite strong cubic variation}
\label{scsmcv}

Following \cite{rusval00}, \cite{erru03} and taking into account Remark~2.1 from~\cite{covrus},
we say that a continuous process $X_{t}$, $0\le t\le T$, has the {\em strong $n$-variation} on $[0,T_1]$ ($T_1<T$) if
\[
[X;n](t)=\lim_{\varepsilon\to 0+}\frac{1}{\varepsilon}\int_{[0,t]} (X_{s+\varepsilon}-X_{s})^n\,{\rm d}s,\quad 0\le t\le T_1,
\]
exists in the sense of uniform convergence in probability, and every sequence $\varepsilon_i\to 0+$ admits subsequence $\tilde{\varepsilon}_{i}$ such that
\[
\sup_i\,\frac{1}{\tilde{\varepsilon}_{i}}\int_{[0,T_1]} |X_{s+\tilde{\varepsilon}_{i}}-X_{s}|^n\,{\rm d}s<+\infty\quad {\rm a.~s.}
\]

\begin{assumption}\label{ass1}
There exists a real-valued  {f}{i}nite  measure~${\sf m}$ on~$\left({\sf X}, {\mathcal{B}}\right)$ with the following property: if a measurable function $h:{\sf X}\to\mathbb{R}$ is such that \mbox{$\int_{\sf X}h^2\,{\rm d}{\sf m}<+\infty$} then $h$ is integrable with respect to~$\mu$ on~${\sf X}$.
\end{assumption}

This assumption holds, for example for SM generated by $B^H_{t}$ with $H>1/2$, for $\alpha$-stable random measures on~${\mathcal B}$ with $\alpha\in(0,2]$ and for orthogonal SMs.

\begin{lemma}\cite[Lemma 3.3]{radt09}\label{lmfmus} Suppose Assumption~\ref{ass1} holds. Let a measurable functions $f_k:{\sf X}\to{\mathbb R}$, $k\ge 1,$ satisfy
\[
\int_{\sf X}\Bigl( \sum_{k=1}^{\infty} f_k^2\Bigr){\rm d}{\sf m}<+\infty.
\]
Then
\[
\sum_{k=1}^{\infty} \Bigl(\int_{\sf X} f_k\,{\rm d}\mu\Bigr)^2< +\infty\quad\mbox{\textrm ~a.~s.}
\]
\end{lemma}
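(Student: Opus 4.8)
The plan is a randomization argument: one tests the random variables $\zeta_k:=\int_{\sf X}f_k\,{\rm d}\mu$ against an independent sequence of i.i.d.\ standard Gaussians and then reads off the square-summability of $(\zeta_k)$ from the classical criterion for almost sure convergence of a series of independent centred Gaussians. Two preliminary facts are used. Since $\int_{\sf X}\bigl(\sum_k f_k^2\bigr)\,{\rm d}{\sf m}<+\infty$, each $f_k$ belongs to $L^2({\sf m})$, so by Assumption~\ref{ass1} it is $\mu$-integrable and $\zeta_k$ is well defined. Moreover, the integral is sequentially continuous as a map from $L^2({\sf m})$ into ${\sf L}_0$: if $g_n\to g$ in $L^2({\sf m})$, then, along an arbitrary subsequence, one extracts a further subsequence $(g_{n_j})$ converging ${\sf m}$-a.e.\ and dominated by a fixed function in $L^2({\sf m})$ (which is $\mu$-integrable by Assumption~\ref{ass1}), whence the dominated convergence theorem for the integral with respect to a stochastic measure (Proposition~7.1.1 of~\cite{kwawoy}) gives $\int_{\sf X}g_{n_j}\,{\rm d}\mu\to\int_{\sf X}g\,{\rm d}\mu$ in probability; since the limit does not depend on the subsequence, $\int_{\sf X}g_n\,{\rm d}\mu\to\int_{\sf X}g\,{\rm d}\mu$ in probability.

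Introduce now an auxiliary probability space $(\Omega',{\mathcal F}',{\sf P}')$ carrying i.i.d.\ $N(0,1)$ variables $(g_k)_{k\ge1}$. Since
\[
{\sf E}'\int_{\sf X}\Bigl(\sum_{k=m}^{n}g_k f_k\Bigr)^2{\rm d}{\sf m}=\sum_{k=m}^{n}\int_{\sf X}f_k^2\,{\rm d}{\sf m}\longrightarrow 0\qquad(m,n\to\infty),
\]
the partial sums of $\sum_k g_k f_k$ form an $L^2({\sf P}')$-bounded martingale with values in the Hilbert space $L^2({\sf m})$, hence converge ${\sf P}'$-a.s.\ in $L^2({\sf m})$. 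By the continuity just noted, for ${\sf P}'$-almost every $\omega'$ the series $\sum_k g_k(\omega')\zeta_k$ converges in ${\sf P}$-probability. A Fubini argument applied to a metric for convergence in probability, together with dominated convergence, then upgrades this to the statement that the partial sums $S_n:=\sum_{k=1}^{n}g_k\zeta_k$ form a Cauchy sequence in probability on the product space $\bigl(\Omega\times\Omega',\,{\sf P}\otimes{\sf P}'\bigr)$.

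It remains to deduce $\sum_k\zeta_k^2<+\infty$ ${\sf P}$-a.s. Suppose, for a contradiction, that $\varepsilon_0:={\sf P}\bigl(\sum_k\zeta_k^2=\infty\bigr)>0$. Since $(g_k)$ is independent of ${\mathcal F}$, conditionally on ${\mathcal F}$ the increment $S_n-S_m=\sum_{k=m+1}^{n}g_k\zeta_k$ is centred Gaussian with variance $\sum_{k=m+1}^{n}\zeta_k^2$. Choose $L$ with ${\sf P}'\bigl(|N(0,L^2)|>1\bigr)\ge1/3$. On the event $\{\sum_k\zeta_k^2=\infty\}$ the tails $\sum_{k=N+1}^{n}\zeta_k^2$ increase to $+\infty$, so for every $N$ there is a deterministic $n^\ast>N$ with ${\sf P}\bigl(\sum_{k=N+1}^{n^\ast}\zeta_k^2\ge L^2\bigr)\ge\varepsilon_0/2$; conditioning on ${\mathcal F}$ and using that the Gaussian tail is monotone in the variance yields ${\sf P}\otimes{\sf P}'\bigl(|S_{n^\ast}-S_N|>1\bigr)\ge\varepsilon_0/6$ for every $N$, which contradicts the Cauchy property obtained above. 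Hence $\sum_k\zeta_k^2<+\infty$ almost surely.

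The routine part is the final Gaussian tail estimate. The genuinely delicate points are the sequential continuity of the integral on $L^2({\sf m})$ --- which rests on extracting an ${\sf m}$-a.e.\ convergent subsequence with an $L^2({\sf m})$-majorant and on the dominated convergence theorem for general stochastic measures --- and the careful transfer of convergence-in-probability statements between $\Omega$, $\Omega'$ and the product $\Omega\times\Omega'$.
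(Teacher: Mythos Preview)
The paper does not prove this lemma; it is quoted without proof from \cite[Lemma~3.3]{radt09}, so there is no in-paper argument to compare against. Your randomization proof is correct. The sequential continuity of $f\mapsto\int_{\sf X}f\,{\rm d}\mu$ from $L^2({\sf m})$ to ${\sf L}_0$ is established exactly as it should be: pass to a subsequence converging ${\sf m}$-a.e.\ with an $L^2({\sf m})$-majorant (obtainable by taking $\|g_{n_j}-g\|_{L^2({\sf m})}\le 2^{-j}$ and using $|g|+\sum_j|g_{n_j}-g|$ as the dominant), then invoke the dominated convergence theorem for integrals with respect to a stochastic measure. The ${\sf P}'$-a.s.\ convergence of $\sum_k g_k f_k$ in $L^2({\sf m})$ follows from the $L^2$-bounded Hilbert-space martingale convergence theorem, and the Fubini/DCT step upgrading ${\sf P}'$-a.e.\ Cauchy-in-${\sf P}$-probability to Cauchy in ${\sf P}\otimes{\sf P}'$-probability is valid because the metric $d(X,Y)={\sf E}[|X-Y|\wedge 1]$ is bounded by $1$. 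The final contradiction is clean: conditionally on ${\mathcal F}$ the increment $S_{n^\ast}-S_N$ is centred Gaussian with variance $\sum_{k=N+1}^{n^\ast}\zeta_k^2$, and the monotonicity of the Gaussian tail in the variance gives the uniform lower bound $\varepsilon_0/6$.

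For context, the argument in \cite{radt09} proceeds along the same lines --- continuity of the integration operator on $L^2({\sf m})$ followed by a randomization --- so your approach is not merely correct but essentially the standard one.
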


\begin{corollary}\label{crsumf}
If Assumption~\ref{ass1} holds then the set of random variables
\[
\Bigl\{\sum_{k=1}^{j} \Bigl(\int_{\sf X} f_k\,{\rm d}\mu\Bigr)^2\quad \Bigr|\quad f_k:{\sf X}\to{\mathbb R}\ {\rm are\ measurable},\quad \sum_{k=1}^{j} f_k^2(x)\le 1,\quad j\ge 1\Bigr\}
\]
is bounded in probability.
\end{corollary}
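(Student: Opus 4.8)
The plan is to argue by contradiction, using Lemma~\ref{lmfmus} together with the freedom to rescale the functions $f_k$: a whole sequence of ``bad'' sums can be combined, after shrinking by geometrically small factors, into a single family to which Lemma~\ref{lmfmus} applies. Recall that a family $\mathcal{S}\subset{\sf L}_0$ of (here, nonnegative) random variables fails to be bounded in probability precisely when there is $\varepsilon_0>0$ with $\sup_{\xi\in\mathcal{S}}{\sf P}(\xi>C)\ge\varepsilon_0$ for every $C>0$; in that case, for each $C>0$ one may pick $\xi\in\mathcal{S}$ with ${\sf P}(\xi>C)>\varepsilon_0/2$. So I would assume the set in the statement is not bounded in probability and fix such an $\varepsilon_0$.

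Next, for each $l\ge 1$ I would apply this with $C=l\,4^{l}$: there are $j_l\ge 1$ and measurable functions $f_1^{(l)},\dots,f_{j_l}^{(l)}:{\sf X}\to{\mathbb R}$ with $\sum_{k=1}^{j_l}\bigl(f_k^{(l)}(x)\bigr)^2\le 1$ for every $x$, such that, setting
\[
\xi_l:=\sum_{k=1}^{j_l}\Bigl(\int_{\sf X}f_k^{(l)}\,{\rm d}\mu\Bigr)^2 ,
\]
one has ${\sf P}(\xi_l>l\,4^{l})>\varepsilon_0/2$. Then I would put $g_k^{(l)}:=2^{-l}f_k^{(l)}$ for $1\le k\le j_l$, $l\ge1$, and enumerate this countable family as a single sequence $(h_i)_{i\ge1}$. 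From $\sum_{k=1}^{j_l}\bigl(g_k^{(l)}(x)\bigr)^2\le 4^{-l}$ and summing over $l$, $\sum_{i\ge1}h_i^2(x)\le\sum_{l\ge1}4^{-l}=1/3$ for all $x\in{\sf X}$, so $\int_{\sf X}\bigl(\sum_{i\ge1}h_i^2\bigr)\,{\rm d}{\sf m}\le{\sf m}({\sf X})/3<+\infty$ since, by Assumption~\ref{ass1}, ${\sf m}$ is a finite measure.

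Now Lemma~\ref{lmfmus} applies to $(h_i)_{i\ge1}$ and gives $\sum_{i\ge1}\bigl(\int_{\sf X}h_i\,{\rm d}\mu\bigr)^2<+\infty$ a.s. All terms being nonnegative, rearranging and grouping back is harmless, and this sum equals $\sum_{l\ge1}4^{-l}\xi_l$. Hence $\sum_{l\ge1}4^{-l}\xi_l<+\infty$ a.s., so $4^{-l}\xi_l\to 0$ a.s., and therefore ${\sf P}(4^{-l}\xi_l>1)\to 0$. But for $l\ge1$,
\[
{\sf P}(4^{-l}\xi_l>1)\ge{\sf P}(4^{-l}\xi_l>l)={\sf P}(\xi_l>l\,4^{l})>\varepsilon_0/2 ,
\]
contradicting ${\sf P}(4^{-l}\xi_l>1)\to0$, which proves the claim.

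The only point needing care is the choice of the scaling exponents: the factors $2^{-l}$ must decay fast enough that $\sum_l(2^{-l})^2=\sum_l 4^{-l}$ converges (so that the combined pointwise bound $\sum_i h_i^2\le 1/3$ is ${\sf m}$-integrable and Lemma~\ref{lmfmus} applies), while at the same time the thresholds $l\,4^{l}$ must outgrow $4^{l}$, so that after multiplying $\xi_l$ by $4^{-l}$ the ``bad event'' keeps probability at least $\varepsilon_0/2$ yet has a threshold tending to $+\infty$; the choice $g_k^{(l)}=2^{-l}f_k^{(l)}$ and $C_l=l\,4^{l}$ does both. The remaining steps (the enumeration of the doubly-indexed family as one sequence and the regrouping of the nonnegative series) are routine.
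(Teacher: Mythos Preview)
Your proof is correct and follows essentially the same approach as the paper: argue by contradiction, extract for each $n$ a ``bad'' finite family with $\sum_k f_{k}^2\le 1$ whose sum of squared integrals exceeds a threshold with probability bounded below, rescale geometrically so that the combined family still satisfies the $\ell^2$ pointwise bound, and apply Lemma~\ref{lmfmus} to obtain a contradiction. The paper uses scaling $2^{-n/2}$ and threshold $2^n$ (so the $n$-th block of the rescaled series exceeds $1$ with probability $>\alpha$, forcing divergence), whereas you use $2^{-l}$ and $l\cdot 4^l$ and spell out the ``terms $\to 0$ a.s.\ $\Rightarrow$ $\mathsf P(\cdot>1)\to 0$'' step explicitly---but the argument is the same.
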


\begin{proof} If the statement fails, for some $\alpha>0$ and all $n\ge 1$ we can find functions $f_{kn}$, $1\le k\le j_n$, such that
\[
\sum_{k=1}^{j_n} f_{kn}^2(x)\le 1,\quad {\sf P}\Bigl\{\sum_{k=1}^{j_n} \Bigl(\int_{\sf X} f_{kn}\,{\rm d}\mu\Bigr)^2>2^n\Bigr\}>\alpha.
\]
Then
\[
\sum_{n=1}^{\infty}\sum_{k=1}^{j_n} \bigl(2^{-n/2}f_{kn}(x)\bigr)^2\le 1,\quad \sum_{n=1}^{\infty}\sum_{k=1}^{j_n} \Bigl(\int_{\sf X} 2^{-n/2}f_{kn}\,{\rm d}\mu\Bigr)^2\ {\rm diverges},
\]
that contradicts Lemma~\ref{lmfmus}. \end{proof}

\begin{theorem}\label{thficv}
Let $\mu$ be a SM on Borel subsets of $[0,T]$, Assumption~\ref{ass1} holds, the process $\mu_{t}=\mu((0,t])$, $0\le t\le T$ has Riemann integrable paths, $0<T_1<T$.

Then the set of random variables
\begin{equation}\label{eqinmu}
\int_{[0,T_1]}\frac{\bigl|\mu_{s+\varepsilon}-\mu_{s}\bigr|^{2}}{\varepsilon}\,{\rm d}s,\quad 0<\varepsilon<T-T_1,
\end{equation}
is bounded in probability.
\end{theorem}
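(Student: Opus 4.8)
The plan is to reduce the integral in~\eqref{eqinmu} to a countable sum of squares of integrals of deterministic functions against $\mu$, and then apply Corollary~\ref{crsumf}. The natural device is an orthonormal (or at least orthogonal, suitably normalized) expansion in $L^2([0,T_1+\varepsilon],\,\mathrm{d}s)$ of the ``shift-difference'' kernel. Concretely, write
\[
\mu_{s+\varepsilon}-\mu_s=\int_{[0,T]}\bigl({\bf 1}_{(s,s+\varepsilon]}(u)\bigr)\,\mathrm{d}\mu_u=\int_{[0,T]}K_{\varepsilon}(s,u)\,\mathrm{d}\mu_u,
\]
where $K_\varepsilon(s,u)={\bf 1}_{\{s<u\le s+\varepsilon\}}$; here Riemann integrability of the paths of $\mu_t$ is what lets us identify $\mu_{s+\varepsilon}-\mu_s$ with an integral and, more importantly, lets us integrate it against $\mathrm{d}s$ and interchange the order of integration with the $\mathrm{d}s$-integral. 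Fix a complete orthonormal system $\{e_k\}_{k\ge 1}$ in $L^2([0,T_1],\,\mathrm{d}s)$. For each fixed $u$, expand $s\mapsto K_\varepsilon(s,u)={\bf 1}_{[u-\varepsilon,u)}(s)$ (restricted to $[0,T_1]$) in this basis, obtaining coefficients $c_k(u)=\int_{[0,T_1]}K_\varepsilon(s,u)e_k(s)\,\mathrm{d}s$. By Parseval, $\sum_k c_k(u)^2=\int_{[0,T_1]}K_\varepsilon(s,u)^2\,\mathrm{d}s\le\varepsilon$ for every $u$, uniformly.

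Next I would interchange sums and integrals to rewrite the target quantity. Applying Parseval in $L^2([0,T_1],\mathrm{d}s)$ to the function $s\mapsto\mu_{s+\varepsilon}-\mu_s$ (which lies in $L^2$ because its paths are bounded on $[0,T]$, being Riemann integrable hence bounded) gives
\[
\int_{[0,T_1]}\bigl|\mu_{s+\varepsilon}-\mu_s\bigr|^2\,\mathrm{d}s=\sum_{k=1}^{\infty}\left(\int_{[0,T_1]}(\mu_{s+\varepsilon}-\mu_s)\,e_k(s)\,\mathrm{d}s\right)^2=\sum_{k=1}^{\infty}\left(\int_{[0,T]}c_k(u)\,\mathrm{d}\mu_u\right)^2,
\]
where the last equality uses a stochastic Fubini step to pull the $\mathrm{d}s$-integral inside $\mathrm{d}\mu_u$. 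Now set $f_k(u)=\varepsilon^{-1/2}c_k(u)$; then $\sum_{k=1}^{\infty}f_k^2(u)=\varepsilon^{-1}\sum_k c_k(u)^2\le 1$ for all $u$, so the functions $f_k$ fall into the admissible class of Corollary~\ref{crsumf} (after truncating to finite sums $j$, the bound there being uniform in $j$). Hence
\[
\int_{[0,T_1]}\frac{\bigl|\mu_{s+\varepsilon}-\mu_s\bigr|^2}{\varepsilon}\,\mathrm{d}s=\sum_{k=1}^{\infty}\left(\int_{[0,T]}f_k\,\mathrm{d}\mu\right)^2,
\]
and the right-hand side is bounded in probability uniformly over the admissible $f_k$'s, in particular uniformly in $\varepsilon\in(0,T-T_1)$. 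This is exactly the assertion.

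The main obstacle I anticipate is the stochastic Fubini step, i.e. justifying $\int_{[0,T_1]}\bigl(\int_{[0,T]}K_\varepsilon(s,u)\,\mathrm{d}\mu_u\bigr)e_k(s)\,\mathrm{d}s=\int_{[0,T]}\bigl(\int_{[0,T_1]}K_\varepsilon(s,u)e_k(s)\,\mathrm{d}s\bigr)\mathrm{d}\mu_u$ for a general SM that has only $\sigma$-additivity in probability and no moments. The clean way around this is to approximate the $\mathrm{d}s$-integral by Riemann sums: since the paths of $\mu_t$ are Riemann integrable, $\int_{[0,T_1]}(\mu_{s+\varepsilon}-\mu_s)e_k(s)\,\mathrm{d}s$ is an a.s. limit of finite sums $\sum_i(\mu_{s_i+\varepsilon}-\mu_{s_i})e_k(s_i)\Delta s_i$, each of which is manifestly $\int_{[0,T]}\bigl(\sum_i{\bf 1}_{(s_i,s_i+\varepsilon]}(u)e_k(s_i)\Delta s_i\bigr)\mathrm{d}\mu_u$; one then passes to the limit inside the integral using the dominated convergence theorem for the deterministic integral against $\mu$ (Proposition~7.1.1 of~\cite{kwawoy}), noting that the Riemann-sum integrands converge pointwise in $u$ to $c_k(u)$ and are uniformly bounded. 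A secondary point is to make sure the interchange of the $k$-sum (Parseval) with everything is legitimate; this is handled by working with finite partial sums $\sum_{k=1}^{j}$ throughout — which is precisely the form in which Corollary~\ref{crsumf} is stated — and only then letting $j\to\infty$, the bound being independent of $j$.
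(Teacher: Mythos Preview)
Your argument is correct, but it takes a more elaborate route than the paper's. You expand $s\mapsto\mu_{s+\varepsilon}-\mu_s$ in an orthonormal basis of $L^2([0,T_1])$, apply Parseval twice (once for the $\mathrm{d}s$-integral and once to bound $\sum_k c_k(u)^2\le\varepsilon$), and then need a stochastic Fubini to turn $\int(\mu_{s+\varepsilon}-\mu_s)e_k(s)\,\mathrm{d}s$ into $\int c_k\,\mathrm{d}\mu$. You rightly flag this Fubini as the delicate point and propose to justify it by Riemann-sum approximation together with the dominated convergence theorem for the deterministic integral against $\mu$.

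The paper sidesteps all of this. It simply writes a Riemann sum for~\eqref{eqinmu} on the equispaced partition $s_{kn}=\frac{k}{n}T_1\varepsilon\wedge T_1$ and observes that each term
\[
\frac{|\mu_{s_{kn}+\varepsilon}-\mu_{s_{kn}}|^2}{\varepsilon}\cdot\frac{T_1\varepsilon}{n}
=T_1\Bigl(\int f_{kn}\,\mathrm{d}\mu\Bigr)^2,\qquad f_{kn}=n^{-1/2}\mathbf{1}_{(s_{kn},s_{kn}+\varepsilon]},
\]
is already a square of a $\mu$-integral of an explicit indicator. Since each point of $[0,T]$ lies in at most about $n/T_1$ of the intervals $(s_{kn},s_{kn}+\varepsilon]$, one has $\sum_k f_{kn}^2\lesssim 1$ uniformly, and Corollary~\ref{crsumf} bounds the Riemann sums in probability uniformly in $n$ and $\varepsilon$; Riemann integrability of the paths passes this to the integral.

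The trade-off: the paper's argument needs no interchange of integrals at all, only the elementary observation that overlapping indicators of length $\varepsilon$ on a mesh $T_1\varepsilon/n$ have controlled pointwise sum. Your approach gives an exact identity rather than an approximation, but at the cost of the Fubini step. To make your justification of that step airtight you should fix a basis of bounded Riemann-integrable functions (e.g.\ trigonometric or Haar), since for an arbitrary $L^2$ basis the Riemann sums you form for $\int(\mu_{s+\varepsilon}-\mu_s)e_k(s)\,\mathrm{d}s$ need not converge pathwise.
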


\begin{proof} For any $n\ge 1$ take the partition of $[0,T_1]$ by points $s_{kn}=\frac{k}{n}T_1\varepsilon\wedge T_1$, $0\le k\le j_n$, and consider Riemann integral sum for~\eqref{eqinmu}
\begin{multline}
\sum_{k=1}^{j_n} \frac{\bigl|\mu_{s_{kn}+\varepsilon}-\mu_{s_{kn}}\bigr|^{2}}{\varepsilon}\,\frac{T_1\varepsilon}{n}
 =  T_1\sum_{k=1}^{j_n} \frac{\bigl|\mu((s_{kn},s_{kn}+\varepsilon])\bigr|^{2}}{n}
=T_1\sum_{k=1}^{j_n} \Bigl(\int_{[0,T_1]} f_{kn}\,{\rm d}\mu\Bigr)^2,\\  {\rm where}  \quad f_{kn}(x)=\frac{1}{\sqrt{n}}\,{\bf 1}_{(s_{kn},s_{kn}+\varepsilon]}(x).\quad \label{eqriems}
\end{multline}
(we take the length of the last segment of the partition equal to $T_1\varepsilon/n$ and the sum does not decrease).
We have $\sum_{k=1}^{j_n} f_{kn}^2(x)\le 1$, by Corollary~\ref{crsumf} set of sums~\eqref{eqriems} is bounded in probability, which implies the statement of the theorem.
\end{proof}

Now we easily obtain the following result for continuous processes $\mu_{t}$.

\begin{corollary}
Let $\mu$ be a SM on Borel subsets of $[0,T]$, Assumption~\ref{ass1} holds and the process $\mu_{t}=\mu((0,t])$, $0\le t\le T$ has continuous paths.
Then for any $\delta>0$ and $T_1$, $0<T_1<T$ we have
\[
\frac{1}{\varepsilon}\int_{[0,T_1]}\bigl|\mu_{s+\varepsilon}-\mu_{s}\bigr|^{2+\delta}\,{\rm d}s\stackrel{\sf P}{\to} 0,\quad \varepsilon\to 0+.
\]
In particular, the process $\mu_{t}$ has the strong cubic variation $[\mu;3](t)=0$, $0\le t\le T_1$.
\end{corollary}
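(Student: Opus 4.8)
The plan is to deduce this directly from Theorem~\ref{thficv} together with the uniform continuity of the paths of $\mu_t$. Fix $T_1<T$ and $\delta>0$. Since the paths of $\mu_t$ are continuous they are in particular Riemann integrable, so Theorem~\ref{thficv} applies and the family $\bigl\{\frac1\varepsilon\int_{[0,T_1]}|\mu_{s+\varepsilon}-\mu_{s}|^{2}\,{\rm d}s:\ 0<\varepsilon<T-T_1\bigr\}$ is bounded in probability. For $0<\varepsilon<T-T_1$ I would introduce the modulus of continuity
\[
\omega(\varepsilon)=\sup\bigl\{|\mu_t-\mu_s|:\ s,t\in[0,T],\ |t-s|\le\varepsilon\bigr\}.
\]
Each path $s\mapsto\mu_s$ is continuous on the compact interval $[0,T]$, hence uniformly continuous there, so $\omega(\varepsilon)$ is a.s. finite and $\omega(\varepsilon)\to 0$ a.s. as $\varepsilon\to0+$; restricting the supremum to rational $s,t$ (legitimate by continuity) makes $\omega(\varepsilon)$ measurable.

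The key step is the elementary pointwise bound: for $s\in[0,T_1]$ we have $|\mu_{s+\varepsilon}-\mu_{s}|\le\omega(\varepsilon)$, hence $|\mu_{s+\varepsilon}-\mu_{s}|^{2+\delta}\le \omega(\varepsilon)^{\delta}\,|\mu_{s+\varepsilon}-\mu_{s}|^{2}$. Integrating over $[0,T_1]$ and dividing by $\varepsilon$ yields
\[
\frac{1}{\varepsilon}\int_{[0,T_1]}|\mu_{s+\varepsilon}-\mu_{s}|^{2+\delta}\,{\rm d}s
\le \omega(\varepsilon)^{\delta}\cdot\frac{1}{\varepsilon}\int_{[0,T_1]}|\mu_{s+\varepsilon}-\mu_{s}|^{2}\,{\rm d}s .
\]
Here $\omega(\varepsilon)^{\delta}\to0$ a.s., hence in probability, while the second factor is bounded in probability; the product of a family tending to $0$ in probability and a family bounded in probability tends to $0$ in probability, which gives the first assertion.

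For the statement about cubic variation I would take $\delta=1$. For every $t\in[0,T_1]$,
\[
\Bigl|\frac{1}{\varepsilon}\int_{[0,t]}(\mu_{s+\varepsilon}-\mu_{s})^{3}\,{\rm d}s\Bigr|
\le\frac{1}{\varepsilon}\int_{[0,T_1]}|\mu_{s+\varepsilon}-\mu_{s}|^{3}\,{\rm d}s ,
\]
and the right-hand side does not depend on $t$ and tends to $0$ in probability by the first part; this provides the uniform (in $t$) convergence in probability with limit $[\mu;3](t)\equiv0$. For the remaining requirement in the definition of strong $3$-variation: given any sequence $\varepsilon_i\to0+$, since $\frac1{\varepsilon_i}\int_{[0,T_1]}|\mu_{s+\varepsilon_i}-\mu_{s}|^{3}\,{\rm d}s\to0$ in probability, one extracts a subsequence $\tilde\varepsilon_{i}$ along which this convergence holds a.s., so in particular $\sup_i\frac1{\tilde\varepsilon_{i}}\int_{[0,T_1]}|\mu_{s+\tilde\varepsilon_{i}}-\mu_{s}|^{3}\,{\rm d}s<+\infty$ a.s.

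The only mildly delicate points are the measurability of $\omega(\varepsilon)$ (handled by restricting the supremum to a countable dense set) and verifying both clauses in the definition of strong $n$-variation; there is no substantial obstacle here, the result being a soft consequence of Theorem~\ref{thficv}.
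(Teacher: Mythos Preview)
Your argument is correct and is precisely the approach the paper has in mind: the paper does not spell out a proof but simply says the corollary is ``easily obtained'' from Theorem~\ref{thficv} for continuous $\mu_t$, and your factorisation $|\mu_{s+\varepsilon}-\mu_s|^{2+\delta}\le\omega(\varepsilon)^\delta|\mu_{s+\varepsilon}-\mu_s|^2$ together with the boundedness in probability from Theorem~\ref{thficv} is exactly the intended one-line deduction. Your verification of both clauses in the definition of strong $3$-variation is also correct and complete.
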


Therefore, under Assumption~\ref{ass1}, for $\mu$ with continuous paths the stochastic calculus developed in~\cite{erru03} may be applied. For instance, if $F\in {\mathbb C}^{3}({\mathbb R})$ then we have that
\[
F(\mu_{t})=F(\mu_{0})+\int_{[0,t]}F'(\mu_{s})\,{\rm d}^{\circ\, }\mu_{s},\quad 0<t<T,
\]
and this symmetric integral is well defined in the sense of~\cite{erru03}, see Proposition~3.3.

In the following section we will give a Stratonovich-type definition of the integral with respect to $\mu$ without assuming Assumption~\ref{ass1}.

\section{Stratonovich-type integral}
\label{scstri}

\begin{lemma} \cite[Lemma]{rads06}
\label{lmfkmu}
Let $\mu$ be a SM and $a_n,\ n\ge 1$, a sequence of positive numbers such that
$\sum_{n=1}^{\infty} a_n < \infty$. Let $\Delta_{kn}\in \mathcal{B},\ n\ge 1,\ 1\le k\le j_n$, be such that for each
$n$ and $k_1\ne k_2\ \Delta_{k_1 n}\cap \Delta_{k_2 n}=\emptyset$. Then
\[
\sum_{n=1}^{\infty} {a_n^2}\sum_{k=1}^{j_n} \mu^2\bigl(\Delta_{kn}\bigr) <\infty\quad \mbox{a.~s.}
\]
\end{lemma}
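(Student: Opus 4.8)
The plan is to rewrite the left-hand side as a sum of squares of integrals of \emph{uniformly bounded} functions, and then to combine the bounded semivariation of a stochastic measure with a symmetrisation argument in an auxiliary sequence of Rademacher variables.

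First I would put $g_{kn}=a_n{\bf 1}_{\Delta_{kn}}$, so that $\sum_{n}a_n^2\sum_{k}\mu^2(\Delta_{kn})=\sum_{n,k}\bigl(\int_{\sf X}g_{kn}\,{\rm d}\mu\bigr)^2$, and isolate the key feature of these functions. For any bounded array of reals $(c_{kn})$ the function $\sum_{n,k}c_{kn}g_{kn}$ is well defined and bounded: for each fixed $n$ the sets $\Delta_{kn}$ are disjoint, so a point $x$ receives at most one contribution at level $n$, whence $\bigl|\sum_{n,k}c_{kn}g_{kn}(x)\bigr|\le(\sup_{k,n}|c_{kn}|)\sum_n a_n<\infty$; and if moreover $\sum_{n,k}c_{kn}^2\le1$ then the pointwise Cauchy--Schwarz inequality gives $\bigl\|\sum_{n,k}c_{kn}g_{kn}\bigr\|_\infty\le(\sum_n a_n^2)^{1/2}$. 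Since every bounded measurable function is $\mu$-integrable and the dominated convergence theorem holds for the integral with respect to a SM, this yields $\int_{\sf X}\bigl(\sum_{n,k}c_{kn}g_{kn}\bigr)\,{\rm d}\mu=\sum_{n,k}c_{kn}\int_{\sf X}g_{kn}\,{\rm d}\mu$, with convergence in probability for every bounded $(c_{kn})$; i.e.\ the family $\{\int_{\sf X}g_{kn}\,{\rm d}\mu\}$ is unconditionally summable in probability.

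Next I would introduce a Rademacher array $(\varepsilon_{kn})$ independent of $\mu$. Ordering the pairs $(n,k)$ in blocks by level and controlling the ``large $k$ at fixed $n$'' part through the continuity of the semivariation of $\mu$ on the decreasing sets $\bigcup_{k>K}\Delta_{kn}\downarrow\emptyset$, the tail of $\sum_{n,k}\varepsilon_{kn}g_{kn}$ has sup-norm bounded by a quantity that tends to $0$ \emph{uniformly in the signs} (here $\sum_n a_n<\infty$ is used). Combined with the fact that $\{\int_{\sf X}h\,{\rm d}\mu:\|h\|_\infty\le1\}$ is bounded in probability (bounded semivariation of a vector measure, \cite[Ch.~7]{kwawoy}), this shows that the partial sums of $\sum_{n,k}\varepsilon_{kn}\int_{\sf X}g_{kn}\,{\rm d}\mu$ are Cauchy in probability with a modulus independent of the signs. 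Passing to the product probability space, applying Fubini's theorem and the Borel--Cantelli lemma along a subsequence, and finally L\'evy's maximal inequality (for frozen $\mu$ the summands $\varepsilon_{kn}\int_{\sf X}g_{kn}\,{\rm d}\mu$ are independent and symmetric in the $\varepsilon$'s), I would conclude that for a.e.\ $\omega$ the series $\sum_{n,k}\varepsilon_{kn}\bigl(\int_{\sf X}g_{kn}\,{\rm d}\mu\bigr)(\omega)$ converges a.s.\ in the Rademacher variables. Since, for fixed real $c_{kn}$, a series $\sum_{n,k}\varepsilon_{kn}c_{kn}$ converges a.s.\ only if $\sum_{n,k}c_{kn}^2<\infty$, taking $c_{kn}=\bigl(\int_{\sf X}g_{kn}\,{\rm d}\mu\bigr)(\omega)$ gives $\sum_{n,k}\bigl(\int_{\sf X}g_{kn}\,{\rm d}\mu\bigr)^2<\infty$ a.s., which by Step~1 is the assertion.

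I expect the main obstacle to be the passage, in the third paragraph, from convergence in probability to almost sure convergence: a general SM carries neither a martingale nor an independence structure, so no maximal inequality is available for $\mu$ itself and the estimate must be routed through the auxiliary Rademacher variables, for which one needs the modulus of Cauchyness to be uniform in the signs. Securing that uniform modulus is exactly where the hypothesis $\sum_n a_n<\infty$ (rather than the weaker $\sum_n a_n^2<\infty$) is essential, since it is what makes $\sum_{n,k}c_{kn}g_{kn}$ a bounded function for \emph{every} bounded coefficient array, and it is also what lets the level-by-level tails of $\sum_{n,k}\varepsilon_{kn}g_{kn}$ vanish in sup-norm.
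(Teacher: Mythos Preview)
The paper does not prove this lemma; it merely quotes it from \cite{rads06}. So there is no in-paper argument to compare against, and the relevant question is simply whether your sketch is sound.

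Your outline is essentially correct and is the standard route to this result. The two key ideas are exactly the ones you isolate: (i) with $g_{kn}=a_n{\bf 1}_{\Delta_{kn}}$, the disjointness at each level $n$ together with $\sum_n a_n<\infty$ forces $\bigl\|\sum_{n,k}c_{kn}g_{kn}\bigr\|_\infty\le(\sup|c_{kn}|)\sum_n a_n$ for \emph{every} bounded sign array, so the family $\{\int g_{kn}\,{\rm d}\mu\}$ is unconditionally summable in ${\sf L}_0$ with a tail modulus independent of signs; and (ii) Rademacher randomisation plus the fact that $\sum_i\varepsilon_i c_i$ converges a.s.\ only if $\sum_i c_i^2<\infty$ converts unconditional summability into square-summability. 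The passage you flag as the main obstacle---upgrading convergence in probability on the product space to, for ${\sf P}$-a.e.\ $\omega$, a.s.\ convergence in the Rademacher variables---is handled exactly as you indicate: pass to a subsequence converging a.s.\ on $\Omega\times\Omega'$, use Fubini, and then, for fixed $\omega$, apply L\'evy's inequality to the independent symmetric summands $\varepsilon_{kn}\int g_{kn}\,{\rm d}\mu(\omega)$ to recover the full sequence.

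One unnecessary complication: you worry about controlling ``large $k$ at fixed $n$'' through $\bigcup_{k>K}\Delta_{kn}\downarrow\emptyset$, but in the stated lemma each level has only finitely many sets ($1\le k\le j_n$), so there is no such tail to handle. The only tail that matters is in $n$, and that is governed directly by $\sum_{n>N}a_n\to0$. With that simplification your argument goes through cleanly.
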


\begin{corollary}\label{crmdkn}
The set of random variables
\[
\Bigl\{\sum_{k=1}^{j} \mu^2\left(\Delta_{kn}\right)\quad \Bigr|\quad \Delta_{kn}\in \mathcal{B},\quad \Delta_{k_1 n}\cap \Delta_{k_2 n}=\emptyset\ {\rm for}\ k_1\ne k_2,\quad j\ge 1\Bigr\}
\]
is bounded in probability.
\end{corollary}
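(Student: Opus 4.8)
The plan is to deduce Corollary~\ref{crmdkn} from Lemma~\ref{lmfkmu} by the same contradiction-and-diagonalization device already used in the proof of Corollary~\ref{crsumf}. Suppose the set of sums $\sum_{k=1}^{j}\mu^2(\Delta_{kn})$ (over finite disjoint families $\{\Delta_{kn}\}_{k=1}^{j_n}$ in $\mathcal B$) is \emph{not} bounded in probability. Then there exist $\alpha>0$ and, for every $n\ge 1$, a finite disjoint family $\{\Delta_{kn}\}_{k=1}^{j_n}\subset\mathcal B$ with
\[
{\sf P}\Bigl\{\sum_{k=1}^{j_n}\mu^2(\Delta_{kn})>4^{n}\Bigr\}>\alpha .
\]

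Next I would apply Lemma~\ref{lmfkmu} with the choice $a_n=2^{-n}$, so that $\sum_n a_n<\infty$ and $a_n^2=4^{-n}$. For each fixed $n$ the family $\{\Delta_{kn}\}_{k=1}^{j_n}$ is pairwise disjoint, which is exactly the hypothesis of the lemma, so the lemma gives
\[
\sum_{n=1}^{\infty}4^{-n}\sum_{k=1}^{j_n}\mu^2(\Delta_{kn})<\infty\quad\text{a.~s.}
\]
In particular the general term tends to $0$ a.s., hence $4^{-n}\sum_{k=1}^{j_n}\mu^2(\Delta_{kn})\rp 0$; equivalently ${\sf P}\{\sum_{k=1}^{j_n}\mu^2(\Delta_{kn})>4^{n}\}\to 0$, contradicting the lower bound $\alpha$ obtained above. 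This contradiction proves the corollary.

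The only point that needs a word of care is that Lemma~\ref{lmfkmu} is stated for a \emph{single} doubly-indexed array $\{\Delta_{kn}\}$ with disjointness only \emph{within} each level $n$, so the families produced by the failure of boundedness (one finite disjoint family per $n$, with no relation between different $n$'s) fit the hypotheses verbatim; no merging of levels or further disjointness is required. So I do not expect a genuine obstacle here — the ``hard part,'' such as it is, is merely choosing the weights $a_n$ and the thresholds so that the a.s.\ convergence of the series forces convergence to $0$ in probability of the $n$-th block, and that is routine. This mirrors exactly the structure of the proof of Corollary~\ref{crsumf}, with Lemma~\ref{lmfkmu} playing the role that Lemma~\ref{lmfmus} played there.
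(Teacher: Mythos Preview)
Your argument is correct and is essentially the paper's own proof: assume failure of boundedness to get disjoint families with $\mathsf P\{\sum_k\mu^2(\Delta_{kn})>c^n\}>\alpha$, then feed these into Lemma~\ref{lmfkmu} with geometric weights $a_n$ to force a contradiction. The only cosmetic difference is that the paper uses thresholds $2^n$ and $a_n=2^{-n/2}$ where you use $4^n$ and $a_n=2^{-n}$.
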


\begin{proof}
If the statement fails, for some $\alpha>0$ for all $n\ge 1$ we can find disjoint sets $\Delta_{kn}\in \mathcal{B}$, $1\le k\le j_n$, such that
\[
{\sf P}\Bigl\{\sum_{k=1}^{j_n} \mu^2\bigl(\Delta_{kn}\bigr)>2^n\Bigr\}>\alpha.
\]
Then for these sets and $a_n=2^{-n/2}$ we have a contradiction with Lemma~\ref{lmfkmu}.
\end{proof}

\begin{definition} Let $\xi_{t}$ and $\eta_{t}$ be random processes on $[0,T]$, $0=t_{0}^n<t_{1}^n<\dots<t_{j_n}^n=T$ be a sequence of partitions such that $\max_k |t_k^n-t_{k-1}^n|\to 0$, $n\to\infty$. We define
\begin{equation}\label{eqdfis}
\int_{(0,T]}\xi_{t}\circ\,{\rm d}\eta_{t}:={\rm p}\lim_{n\to\infty}\sum_{k=1}^{j_n}\frac{\xi_{t_{k-1}^n}+\xi_{t_{k}^n}}{2}\,\bigl(\eta_{t_{k}^n}- \eta_{t_{k-1}^n}\bigr)
\end{equation}
provided that this limit in probability exists.
\end{definition}

\begin{assumption}\label{ass2}
$\mu$ is a SM on Borel subsets of $[0,T]$, and the process $\mu_{t}=\mu((0,t])$ has continuous paths on $[0,T]$.
\end{assumption}

\begin{assumption}\label{ass3}
$V_{t}$ is a continuous process of bounded variation on $[0,T]$,
\end{assumption}

\begin{theorem}
Let Assumptions~\ref{ass2} and~\ref{ass3} hold, $f\in {\mathbb C}^{1,1}\bigl({\mathbb R}^2\bigr)$. Then integral~\eqref{eqdfis} of $f(\mu_{t}, V_{t})$ with respect to $\mu_{t}$ is well defined, and
\begin{equation}
\int_{(0,T]}f(\mu_{t}, V_{t})\circ\,{\rm d}\mu_{t}
=F(\mu_{t},V_{t})-F(\mu_{0},V_{0})-\int_{(0,T]} F_2'(\mu_{t},V_{t})\,{\rm d}V_{t},\label{eqintf}
\end{equation}
where $F(x,v)=\int_{0}^x f(y,v)\,{\rm d}y$.
\end{theorem}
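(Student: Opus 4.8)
The plan is to expand the Stratonovich sum in~\eqref{eqdfis} with $\xi_t=f(\mu_t,V_t)$ and $\eta_t=\mu_t$, and to compare it with the telescoping sum for $F(\mu_t,V_t)$. Write $\Delta_k\mu=\mu_{t_k^n}-\mu_{t_{k-1}^n}$ and $\Delta_k V=V_{t_k^n}-V_{t_{k-1}^n}$. The first step is to apply a second-order Taylor expansion to $F$ in the $x$-variable and a first-order expansion in the $v$-variable:
\[
F(\mu_{t_k^n},V_{t_k^n})-F(\mu_{t_{k-1}^n},V_{t_{k-1}^n})
= f(\mu_{t_{k-1}^n},V_{t_{k-1}^n})\,\Delta_k\mu + \tfrac12 f_x^*(\cdots)(\Delta_k\mu)^2 + F_2'(\cdots)\,\Delta_k V,
\]
where the starred/dotted arguments are suitable intermediate points (using $F_1'=f$, $F_{11}''=f_1'$, $F_2'$). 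The symmetric sum contributes $\tfrac12(f(\mu_{t_{k-1}^n},V_{t_{k-1}^n})+f(\mu_{t_k^n},V_{t_k^n}))\Delta_k\mu$, so the difference between the telescoped $F$-increment and the Stratonovich summand is, for each $k$, of the form $\tfrac12(f(\mu_{t_{k-1}^n},V_{t_{k-1}^n})-f(\mu_{t_k^n},V_{t_k^n}))\Delta_k\mu + \tfrac12 f_1'(\cdots)(\Delta_k\mu)^2 + F_2'(\cdots)\Delta_k V$ plus a symmetric correction; expanding $f(\mu_{t_k^n},V_{t_k^n})-f(\mu_{t_{k-1}^n},V_{t_{k-1}^n})$ again by the mean value theorem in both variables, the $f_1'\,\Delta_k\mu$ pieces of order $(\Delta_k\mu)^2$ cancel up to the difference of $f_1'$ evaluated at nearby points, and we are left with: (i) terms of the shape $(\text{small})\cdot(\Delta_k\mu)^2$ where ``small'' is an increment of a uniformly continuous function over a mesh-size interval, (ii) terms of the shape $(\text{bounded})\cdot\Delta_k\mu\cdot\Delta_k V$, and (iii) Riemann sums for $\int F_2'(\mu_t,V_t)\,{\rm d}V_t$.

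The second step is to control these three families as $n\to\infty$. Family (iii) converges in probability (indeed a.s.\ along paths) to $\int_{(0,T]}F_2'(\mu_t,V_t)\,{\rm d}V_t$ by continuity of $t\mapsto F_2'(\mu_t,V_t)$ and $V$ being of bounded variation — this is a pathwise Riemann--Stieltjes statement. Family (ii) is bounded in absolute value by $\bigl(\max_k|\Delta_k\mu|\bigr)\cdot\|f\|_{\infty,K}\cdot \mathrm{Var}(V)$ on the (random, a.s.\ compact) range $K$ of $(\mu_t,V_t)$; since $\mu_t$ is continuous its modulus of continuity over mesh intervals tends to $0$ a.s., so this family tends to $0$ in probability. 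Family (i) is the crux: a generic term is bounded by $\omega_n\sum_k(\Delta_k\mu)^2$, where $\omega_n$ is a (random) modulus-of-continuity factor for $f_1'$ (or for $f$) along the path, hence $\omega_n\to 0$ a.s. The quantity $\sum_k(\Delta_k\mu)^2=\sum_k\mu^2((t_{k-1}^n,t_k^n])$ involves disjoint Borel sets, so by Corollary~\ref{crmdkn} the family $\{\sum_k(\Delta_k\mu)^2\}_{n}$ is bounded in probability. The product of a sequence tending to $0$ a.s.\ and a sequence bounded in probability tends to $0$ in probability; hence family (i) vanishes in the limit.

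The main obstacle is precisely family (i): one must arrange the Taylor expansion so that every leftover term genuinely carries a vanishing scalar factor (an increment of a uniformly continuous function over an interval of length $\le\max_k|t_k^n-t_{k-1}^n|$ or over a $\mu$-increment of vanishing size) multiplying a sum of squared $\mu$-increments, and then invoke the boundedness in probability from Corollary~\ref{crmdkn} rather than any moment bound. Care is needed because the intermediate Taylor points depend on $k$ and $n$ and lie between the path values at consecutive partition points, so one should first restrict to an event $\Omega_N$ on which the path of $(\mu_t,V_t)$ stays in a fixed compact set and on which $\sum_k\mu^2((t_{k-1}^n,t_k^n])\le N$; on $\Omega_N$ uniform continuity of $f,f_1'$ on that compact set gives the quantitative control, and letting $N\to\infty$ (using boundedness in probability to make ${\sf P}(\Omega_N^c)$ small uniformly in $n$) yields convergence in probability on all of $\Omega$. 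Once the difference between the Stratonovich sums and $F(\mu_T,V_T)-F(\mu_0,V_0)-\sum_k F_2'(\cdots)\Delta_k V$ is shown to go to $0$ in probability, the existence of the limit in~\eqref{eqdfis} and the formula~\eqref{eqintf} follow simultaneously.
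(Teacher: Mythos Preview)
Your proposal is correct and follows essentially the same route as the paper: a second-order Taylor expansion of $F$ in the first variable and a first-order expansion in the second, reduction of the remainder to terms of the shape $o(1)\sum_k\mu^2(\Delta_{kn})$ handled via Corollary~\ref{crmdkn}, together with Riemann--Stieltjes convergence for the $V$-part. The only organizational difference is that the paper obtains the symmetric form $\tfrac12\bigl(F_1'(x,v)+F_1'(x_1,v_1)\bigr)(x_1-x)$ in one stroke by writing the expansion of $F(x_1,v_1)-F(x,v)$ at both endpoints and averaging, whereas you subtract the Stratonovich summand and re-expand $f$; the resulting estimates are the same.
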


\begin{proof}
For $g\in {\mathbb C}^{2}\bigl({\mathbb R}\bigr)$ we have
\begin{eqnarray}
g(b)=g(a)+g'(a)(b-a)+\frac{1}{2}g''(a)(b-a)^2
+\int_a^b (g''(x)-g''(a))(b-x)\,{\rm d}x,\nonumber\\
\Bigl|\int_a^b (g''(x)-g''(a))(b-x)\,{\rm d}x\Bigr|\le \frac{1}{2}\max_{a\le x\le b}|g''(x)-g''(a)|(b-a)^2\label{eqestg}.
\end{eqnarray}

By similar way, for $h\in {\mathbb C}^{1}\bigl({\mathbb R}\bigr)$ we get
\begin{eqnarray}
h(b)=h(a)+h'(a)(b-a)+\int_a^b (h'(x)-h'(a))\,{\rm d}x,\nonumber\\
\Bigl|\int_a^b (h'(x)-h'(a))\,{\rm d}x\Bigr|\le \max_{a\le x\le b}|h'(x)-h'(a)|(b-a)\label{eqesth}.
\end{eqnarray}

So for $F\in {\mathbb C}^{2,1}\bigl({\mathbb R}^2\bigr)$ it follows
\begin{multline*}
F(x_1,v_1)-F(x,v)=F(x_1,v)-F(x,v)-(F(x_1,v)-F(x_1,v_1))\\
=F_1'(x,v)(x_1-x)+\frac{1}{2}F_{11}''(x,v)(x_1-x)^2+o(1)(x_1-x)^2
-F_2'(x_1,v_1)(v-v_1)+o(1)(v-v_1).
\end{multline*}
We considered there $o(1)$ as  $x_1\to x$, $v_1\to v$, and their values may be estimated by \eqref{eqestg} and \eqref{eqesth}.

If we change here $x_1\leftrightarrow x$,  $v_1\leftrightarrow v$ and take the difference of two equalities we get
\begin{multline}
F(x_1,v_1)-F(x,v)
=\frac{F_1'(x,v)+F_1'(x_1,v_1)}{2}(x_1-x)\\
+\frac{F_2'(x,v)+F_2'(x_1,v_1)}{2}(v_1-v)
+o(1)(x_1-x)^2+o(1)(v-v_1).\label{eqdiff}
\end{multline}
(Here we used that $F_{11}''(x_1,v_1)-F_{11}''(x,v)=o(1)$.)

Further, we denote $\Delta_{kn}=\bigl(t_{k-1}^n,t_{k}^n\bigr]$, and get
\begin{multline*}
F(\mu_{t},V_{t})-F(\mu_{0},V_{0})
=\sum_{k=1}^{j_n} \bigl(F(\mu_{t_{k}^n},V_{t_{k}^n})-F(\mu_{t_{k-1}^n},V_{t_{k-1}^n})\bigr)\\
=\sum_{k=1}^{j_n}  \frac{F_1'(\mu_{t_{k-1}^n},V_{t_{k-1}^n})
+F_1'(\mu_{t_{k}^n},V_{t_{k}^n})}{2} \mu(\Delta_{kn})
+\sum_{k=1}^{j_n} \frac{F_2'(\mu_{t_{k-1}^n},V_{t_{k-1}^n})+F_2'(\mu_{t_{k}^n},V_{t_{k}^n})}{2} \,\bigl(V_{t_{k}^n}-V_{t_{k-1}^n}\bigr)\\
 +o(1)\sum_{k=1}^{j_n} \mu^2(\Delta_{kn}) + o(1)\sum_{k=1}^{j_n} \bigl(V_{t_{k}^n}-V_{t_{k-1}^n}\bigr).
\end{multline*}
We take $n\to\infty$, use Corollary~\ref{crmdkn} and obtain~\eqref{eqintf}.

Note that for uniformly continuous functions $\mu_{t}$, $V_{t}$, $t\in [0, T]$, here we have $o(1)\to 0$ as $\max_k |t_k^n-t_{k-1}^n|\to 0$.
\end{proof}

\begin{theorem}
Let Assumptions~\ref{ass2} and~\ref{ass3} hold, $f\in {\mathbb C}^{1,1}\bigl({\mathbb R}^2\bigr)$, $g\in {\mathbb C}^{2,1}\bigl({\mathbb R}^2\bigr)$. Then the integral~\eqref{eqdfis} of $f(\mu_{t}, V_{t})$ with respect to $g(\mu_{t}, V_{t})$ is well de{f}{i}ned, and
\begin{multline}
\int_{(0,T]}f(\mu_{t}, V_{t})\circ\,{\rm d}g(\mu_{t}, V_{t})
=\int_{(0,T]} f(\mu_{t}, V_{t}) g_1'(\mu_{t},V_{t})\circ\,{\rm d}\mu_{t}
+\int_{(0,T]} f(\mu_{t}, V_{t}) g_2'(\mu_{t},V_{t})\,{\rm d}V_{t}.\label{eqinfv}
\end{multline}
\end{theorem}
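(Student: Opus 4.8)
The plan is to expand the Stratonovich sum for $f(\mu_t,V_t)$ with respect to $g(\mu_t,V_t)$ directly and show that, term by term, it converges to the right-hand side of~\eqref{eqinfv}. First I would write, with $\Delta_{kn}=(t_{k-1}^n,t_k^n]$,
\[
\sum_{k=1}^{j_n}\frac{f(\mu_{t_{k-1}^n},V_{t_{k-1}^n})+f(\mu_{t_{k}^n},V_{t_{k}^n})}{2}\,\bigl(g(\mu_{t_{k}^n},V_{t_{k}^n})-g(\mu_{t_{k-1}^n},V_{t_{k-1}^n})\bigr),
\]
and apply the Taylor-type expansion~\eqref{eqdiff} (valid since $g\in{\mathbb C}^{2,1}$) to the increment of $g$ across $\Delta_{kn}$. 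This produces a main term in which the increment of $g$ is replaced by $\tfrac12\bigl(g_1'(\mu_{t_{k-1}^n},V_{t_{k-1}^n})+g_1'(\mu_{t_{k}^n},V_{t_{k}^n})\bigr)\mu(\Delta_{kn})$ plus $\tfrac12\bigl(g_2'(\mu_{t_{k-1}^n},V_{t_{k-1}^n})+g_2'(\mu_{t_{k}^n},V_{t_{k}^n})\bigr)(V_{t_k^n}-V_{t_{k-1}^n})$, together with remainder factors $o(1)\mu^2(\Delta_{kn})$ and $o(1)(V_{t_k^n}-V_{t_{k-1}^n})$, where $o(1)\to 0$ by uniform continuity of $\mu_t$ and $V_t$.

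The second step is to handle the remainder sums. When multiplied by the bounded factor $\tfrac12(f(\mu_{t_{k-1}^n},V_{t_{k-1}^n})+f(\mu_{t_{k}^n},V_{t_{k}^n}))$ — bounded because $f$ is continuous and the paths of $\mu$ and $V$ are continuous, hence bounded on $[0,T]$ — the term $o(1)\sum_k\mu^2(\Delta_{kn})$ tends to $0$ in probability by Corollary~\ref{crmdkn}, since the sets $\Delta_{kn}$ are disjoint and the partition sums $\sum_k\mu^2(\Delta_{kn})$ are bounded in probability uniformly in $n$; the term $o(1)\sum_k|V_{t_k^n}-V_{t_{k-1}^n}|$ tends to $0$ because $\sum_k|V_{t_k^n}-V_{t_{k-1}^n}|$ is bounded by the total variation of $V$ (Assumption~\ref{ass3}).

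The third step is to identify the two main sums with the integrals on the right of~\eqref{eqinfv}. The sum
\[
\sum_{k=1}^{j_n}\frac{f(\mu_{t_{k-1}^n},V_{t_{k-1}^n})+f(\mu_{t_{k}^n},V_{t_{k}^n})}{2}\cdot\frac{g_1'(\mu_{t_{k-1}^n},V_{t_{k-1}^n})+g_1'(\mu_{t_{k}^n},V_{t_{k}^n})}{2}\,\mu(\Delta_{kn})
\]
should be compared with the Stratonovich sum defining $\int_{(0,T]}f(\mu_t,V_t)g_1'(\mu_t,V_t)\circ\,{\rm d}\mu_t$, whose existence is guaranteed by the previous theorem applied to the ${\mathbb C}^{1,1}$ function $(x,v)\mapsto f(x,v)g_1'(x,v)$. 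The difference of these two sums is a sum of products of $\mu(\Delta_{kn})$ with quantities of the form $o(1)$ (again by uniform continuity, since $\tfrac14(a_1+a_2)(b_1+b_2)-\tfrac12(a_1b_1+a_2b_2)=-\tfrac14(a_1-a_2)(b_1-b_2)$ is $o(1)$ when both increments are small), so after a Cauchy–Schwarz estimate $\bigl|\sum_k c_{kn}\mu(\Delta_{kn})\bigr|\le(\max_k|c_{kn}|)\,j_n^{1/2}(\sum_k\mu^2(\Delta_{kn}))^{1/2}$ — which is the wrong bound — one instead keeps the increments paired: $\sum_k o(1)\mu(\Delta_{kn})$ with $|o(1)|\le\omega_n\to0$ bounds by $\omega_n\sum_k|\mu(\Delta_{kn})|$, still not obviously controlled. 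The cleaner route, and the one I would take, is to absorb the discrepancy into an $o(1)\sum_k\mu^2(\Delta_{kn})$ term plus a term $\sum_k o(1)\mu(\Delta_{kn})$ bounded via $|ab|\le\tfrac12(a^2+b^2)$ by $\tfrac12\omega_n(\sum_k\mu^2(\Delta_{kn})+\sum_k\omega_n)$, all vanishing by Corollary~\ref{crmdkn}. The analogous $V$-sum converges to $\int_{(0,T]}f(\mu_t,V_t)g_2'(\mu_t,V_t)\,{\rm d}V_t$ as an ordinary Riemann–Stieltjes limit, the symmetrization error being $o(1)\cdot\mathrm{Var}(V)\to0$.

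The main obstacle is the third step: controlling the difference between the product-of-averages coefficient $\tfrac14(f+f)(g_1'+g_1')$ that arises naturally from the expansion and the average-of-products coefficient implicitly needed to match the Stratonovich sum for $\int f g_1'\circ{\rm d}\mu$. One must verify that this difference is genuinely $o(1)$ uniformly over $k$ and then dispose of the resulting $\sum_k o(1)\,\mu(\Delta_{kn})$ term using only the boundedness in probability of $\sum_k\mu^2(\Delta_{kn})$ from Corollary~\ref{crmdkn} — there is no $\sum_k|\mu(\Delta_{kn})|$ control available, so every stray term must be forced into the squared form before passing to the limit.
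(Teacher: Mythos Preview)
Your first two steps match the paper exactly, and you correctly locate the real difficulty in step three: the discrepancy
\[
\frac{1}{4}\bigl(f(\mu_s,V_s)+f(\mu_u,V_u)\bigr)\bigl(g_1'(\mu_s,V_s)+g_1'(\mu_u,V_u)\bigr)-\frac{1}{2}\bigl(f(\mu_s,V_s)g_1'(\mu_s,V_s)+f(\mu_u,V_u)g_1'(\mu_u,V_u)\bigr)
=-\frac{1}{4}\bigl(f(\mu_u,V_u)-f(\mu_s,V_s)\bigr)\bigl(g_1'(\mu_s,V_s)-g_1'(\mu_u,V_u)\bigr).
\]
But your proposed way of disposing of $\sum_k c_{kn}\,\mu(\Delta_{kn})$ does not work. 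With $|c_{kn}|\le\omega_n$, the bound $|c_{kn}\mu(\Delta_{kn})|\le\tfrac{\omega_n}{2}\bigl(1+\mu^2(\Delta_{kn})\bigr)$ gives, after summing, $\tfrac{\omega_n}{2}\bigl(j_n+\sum_k\mu^2(\Delta_{kn})\bigr)$; the second piece is fine by Corollary~\ref{crmdkn}, but $j_n\omega_n$ is completely uncontrolled: the number of partition points $j_n$ can grow arbitrarily fast relative to the modulus of continuity of the (merely continuous) paths $\mu_t,V_t$. No purely ``$o(1)$ times $\mu(\Delta_{kn})$'' argument can succeed here, because there is no control on $\sum_k|\mu(\Delta_{kn})|$.

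The missing idea, and the place where the hypothesis $g\in\mathbb{C}^{2,1}$ (rather than $\mathbb{C}^{1,1}$) is actually used, is to expand the factor $g_1'(\mu_s,V_s)-g_1'(\mu_u,V_u)$ \emph{once more}. Since $g_1'$ has continuous partials $g_{11}''$ and $g_{12}''$, one writes
\[
g_1'(\mu_s,V_s)-g_1'(\mu_u,V_u)=O(1)\,\mu(\Delta_{kn})+O(1)\,(V_u-V_s)+\text{smaller},
\]
so that, after multiplying by $\tfrac14(f_u-f_s)\,\mu(\Delta_{kn})=o(1)\,\mu(\Delta_{kn})$, the cross term becomes $o(1)\,\mu^2(\Delta_{kn})+o(1)\,\mu(\Delta_{kn})(V_u-V_s)$ plus higher order. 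Now every stray $\mu(\Delta_{kn})$ has been paired either with another $\mu(\Delta_{kn})$ (handled by Corollary~\ref{crmdkn}) or with an increment of $V$ (handled by $\max_k|\mu(\Delta_{kn})|\cdot\mathrm{Var}(V)\to 0$). This is exactly what the paper does. Your outline would go through once you insert this second expansion in place of the $|ab|\le\tfrac12(a^2+b^2)$ trick.
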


\begin{proof} Denote $\zeta_{t}=f(\mu_{t}, V_{t})$, $u=t_{k}^n$, $s=t_{k-1}^n$, and consider
\begin{multline*}
\frac{\xi_{s}+\xi_{u}}{2}\,\bigl(g(\mu_{u}, V_{u})-g(\mu_{s}, V_{s})\bigr)
\stackrel{\eqref{eqdiff}}{=}\frac{\xi_{s}+\xi_{u}}{2}
\frac{g_1'(\mu_{s}, V_{s})+g_1'(\mu_{u}, V_{u})}{2}\mu(\Delta_{kn})\\
+\frac{\xi_{s}+\xi_{u}}{2} \frac{g_2'(\mu_{s}, V_{s})+g_2'(\mu_{u}, V_{u})}{2}(V_{u}-V_{s})
+o(1)\mu^2(\Delta_{kn})+o(1)(V_{u}-V_{s}).
\end{multline*}
For the first summand we have
\begin{multline*}
\frac{\xi_{s}+\xi_{u}}{2} \frac{g_1'(\mu_{s}, V_{s})+g_1'(\mu_{u}, V_{u})}{2}\mu(\Delta_{kn})
-\frac{\xi_{s}g_1'(\mu_{s}, V_{s})+\xi_{u}g_1'(\mu_{u}, V_{u})}{2}\mu(\Delta_{kn})\\
=\frac{\xi_{u}-\xi_{s}}{2}\frac{g_1'(\mu_{s}, V_{s})-g_1'(\mu_{u}, V_{u})}{2} \mu(\Delta_{kn})
\stackrel{\eqref{eqdiff}}{=}
\frac{\xi_{u}-\xi_{s}}{4}\Bigl(\frac{g_{11}'(\mu_{s}, V_{s})+g_{11}'(\mu_{u}, V_{u})}{2}\mu^2(\Delta_{kn})\\
+\frac{g_{12}'(\mu_{s}, V_{s})+g_{12}'(\mu_{u}, V_{u})}{2}\mu(\Delta_{kn})(V_{u}-V_{s})
+o(1)\mu^3(\Delta_{kn})+o(1)(V_{u}-V_{s})\mu(\Delta_{kn})\Bigr).
\end{multline*}
The sum over this terms for $1\le k\le j_n$ tends to 0 as $n\to \infty$ because $\xi_{u}-\xi_{s}=o(1)$, thus we obtain the integral with respect to $\mu$ in~\eqref{eqinfv}, and analogously
\[
\sum \frac{\xi_{s}+\xi_{u}}{2} \frac{g_2'(\mu_{s}, V_{s})+g_2'(\mu_{u}, V_{u})}{2}(V_{u}-V_{s})
\to \int_{(0,T]} f(\mu_{t}, V_{t}) g_2'(\mu_{t},V_{t})\,{\rm d}V_{t}.
\]
as usual Stieltjes integral.
\end{proof}

\section{SDE driven by SM}
\label{scsdes}

Let $\mu$ be a SM such that the process $\mu_{t}=\mu((0,t])$ is continuous.

In this section we study the stochastic equation of the form
\begin{equation}\label{eqstmu}
\circ\,{\rm d}X_{t}=\sigma(X_t)\circ\,{\rm d}\mu_t+b(X_t,t)\,{\rm d}t,\quad 0\le t\le T.
\end{equation}
We can guarantee the existence of the stochastic integral only for functions of the kind $f(\mu_{t}, V_{t})$. Therefore,
using a Doss--Sussmann transformation is a natural approach in this case because it gives solution as a function of the integrator. Our consideration will be similar to \cite[Section 4]{erru03}.

\begin{definition}\label{dfsleq2}  A process $X_{t}$, $0\le t\le T$ is a \emph{solution} to~\eqref{eqstmu} if:

1) $X_{t}=f(\mu_{t},Y_{t})$, $f\in{\mathbb C}^{2,1}({\mathbb R}^2)$, $Y$ is a continuous process of bounded variation;

2) for any process $Z_s=\psi(\mu_s,X_s)$, $\psi\in {\mathbb C}^{1,1}({\mathbb R}^2)$, we have
\begin{equation}\label{eqzetx}
\int_{(0,t]} Z_s\circ\,{\rm d}X_s=\int_{(0,t]} Z_s\sigma(X_s)\circ\,{\rm d}\mu_s+\int_{(0,t]}  Z_s b(X_s,s)\,{\rm d}s,\quad t\in [0,T].
\end{equation}
\end{definition}

\begin{assumption}\label{ass4}
1) $\sigma\in {\mathbb C}^2({\mathbb R})$ and the derivatives $\sigma'$, $\sigma''$ are bounded;

2) $b\in {\mathbb C}({\mathbb R}\times [0, T])$;

3) for each $C>0$ there exists a $L(C)$ such that
\[
|b(x,t)-b(y,t)|\le L(C)|x-y|,\quad |x|,\ |y|\le C;
\]

4) $|b(x,t)|\le K(1+|x|)$.

\end{assumption}

Let $F:{\mathbb R}^2\to{\mathbb R}$ be the solution of the equation
\begin{equation}\label{eqderf}
\frac{\partial F}{\partial  r}(r,x)=\sigma(F(r,x)),\quad F(0,x)=x,
\end{equation}
which exists globally because of our assumptions. Set $H(r,x)=F^{-1}(r,x)$, where the inverse is taken with respect to $x$. We have that $F,\ H\in {\mathbb C}^{2,2}({\mathbb R}^2)$ and
\begin{eqnarray}\label{eqprdh}
\frac{\partial H}{\partial  r}(r,x)=-\sigma(x)\frac{\partial H}{\partial  x} (r,x)
\end{eqnarray}
(see calculations in~\cite{rusval00}  (5.5)--(5.12)).

\begin{theorem} Let Assumptions~\ref{ass2} and~\ref{ass4} hold, $X_0$ be an arbitrary random variable. Then equation~\eqref{eqstmu} has a unique solution $X_t=F(\mu_t,Y_t)$, where $Y_t$ is the solution of the random equation
\begin{equation}\label{eqsolx}
Y_t=H(0,X_0)+\int_0^t \frac{\partial H}{\partial
x} (\mu_s, F(\mu_s,Y_s)) b(F(\mu_s,Y_s),s)\,{\rm d}s.
\end{equation}
\end{theorem}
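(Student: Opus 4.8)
The plan is to follow the Doss--Sussmann route of \cite[Section~4]{erru03}: solve the random ordinary equation~\eqref{eqsolx} for $Y$, check that $X_t=F(\mu_t,Y_t)$ meets Definition~\ref{dfsleq2}, and then show every solution collapses to it. First I would treat~\eqref{eqsolx}: for fixed $\omega$ it is the ordinary equation $\dot Y_t=\varphi(t,Y_t)$ with $\varphi(t,y)=\frac{\partial H}{\partial x}(\mu_t,F(\mu_t,y))\,b(F(\mu_t,y),t)$, and $\varphi$ is jointly continuous and locally Lipschitz in $y$ since $F,H\in{\mathbb C}^{2,2}$ and $b$ is locally Lipschitz (Assumption~\ref{ass4}(3)). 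As $\mu_\cdot(\omega)$ has compact range $[-R,R]$, differentiating~\eqref{eqderf} in $x$ gives $\frac{\partial}{\partial r}\frac{\partial F}{\partial x}=\sigma'(F)\,\frac{\partial F}{\partial x}$ with $\frac{\partial F}{\partial x}(0,\cdot)=1$, hence $e^{-MR}\le\frac{\partial F}{\partial x}(r,x)\le e^{MR}$ for $|r|\le R$, $M=\sup|\sigma'|$; so $F(\mu_t,y)$ is at most affine in $y$, $\frac{\partial H}{\partial x}(\mu_t,F(\mu_t,y))=1/\frac{\partial F}{\partial x}(\mu_t,y)$ is bounded, and the linear growth of $b$ (Assumption~\ref{ass4}(4)) forces $\varphi$ to have linear growth in $y$. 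A Gronwall estimate excludes explosion, so $Y$ exists on $[0,T]$, is $C^1$, hence continuous of bounded variation; put $X_t=F(\mu_t,Y_t)$, $f:=F\in{\mathbb C}^{2,1}$, so part~1) of Definition~\ref{dfsleq2} holds, $X_0$ being the prescribed value since $F(0,\cdot)$ is the identity.

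Next I would check part~2). For $\psi\in{\mathbb C}^{1,1}$ the process $Z_s=\psi(\mu_s,X_s)=\psi(\mu_s,F(\mu_s,Y_s))$ equals $\tilde\psi(\mu_s,Y_s)$ for some $\tilde\psi\in{\mathbb C}^{1,1}$, so formula~\eqref{eqinfv} (with integrand $\tilde\psi$, ``$g$''$=F$, $V=Y$) gives
\[
\int_{(0,t]}Z_s\circ\,{\rm d}X_s=\int_{(0,t]}Z_s\,\frac{\partial F}{\partial r}(\mu_s,Y_s)\circ\,{\rm d}\mu_s+\int_{(0,t]}Z_s\,\frac{\partial F}{\partial x}(\mu_s,Y_s)\,{\rm d}Y_s.
\]
By~\eqref{eqderf}, $\frac{\partial F}{\partial r}(\mu_s,Y_s)=\sigma(F(\mu_s,Y_s))=\sigma(X_s)$, and since $\frac{\partial H}{\partial x}(\mu_s,X_s)=1/\frac{\partial F}{\partial x}(\mu_s,Y_s)$, equation~\eqref{eqsolx} turns the last term into $\int_{(0,t]}Z_s\,b(X_s,s)\,{\rm d}s$; this is precisely the right-hand side of~\eqref{eqzetx}, so $X$ is a solution.

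For uniqueness, let $\tilde X_t=\tilde f(\mu_t,\tilde Y_t)$ ($\tilde f\in{\mathbb C}^{2,1}$, $\tilde Y$ continuous of bounded variation) be any solution and put $\bar Y_t:=H(\mu_t,\tilde X_t)$; using $\tilde X_s=\tilde f(\mu_s,\tilde Y_s)$ one has $\bar Y_t=\Phi(\mu_t,\tilde Y_t)$ with $\Phi(x,v)=H(x,\tilde f(x,v))$ and $\Phi\in{\mathbb C}^{2,1}$. From~\eqref{eqintf} applied with integrand $G_1'$ (so that there $F(x,v)=G(x,v)-G(0,v)$), together with the fundamental theorem of calculus for $v\mapsto G(0,v)$ along a bounded-variation path, one gets the chain rule $G(\mu_t,V_t)=G(\mu_0,V_0)+\int_{(0,t]}G_1'(\mu_s,V_s)\circ\,{\rm d}\mu_s+\int_{(0,t]}G_2'(\mu_s,V_s)\,{\rm d}V_s$ for $G\in{\mathbb C}^{2,1}$. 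Taking $G=\Phi$, $V=\tilde Y$ and simplifying via~\eqref{eqprdh} (which gives $\Phi_1'(\mu_s,\tilde Y_s)=\frac{\partial H}{\partial x}(\mu_s,\tilde X_s)[\tilde f_1'(\mu_s,\tilde Y_s)-\sigma(\tilde X_s)]$, while $\Phi_2'(\mu_s,\tilde Y_s)=\frac{\partial H}{\partial x}(\mu_s,\tilde X_s)\,\tilde f_2'(\mu_s,\tilde Y_s)$),
\[
\bar Y_t=H(0,X_0)+\int_{(0,t]}\frac{\partial H}{\partial x}(\mu_s,\tilde X_s)[\tilde f_1'(\mu_s,\tilde Y_s)-\sigma(\tilde X_s)]\circ\,{\rm d}\mu_s+\int_{(0,t]}\frac{\partial H}{\partial x}(\mu_s,\tilde X_s)\,\tilde f_2'(\mu_s,\tilde Y_s)\,{\rm d}\tilde Y_s.
\]
On the other hand, putting $Z_s=\frac{\partial H}{\partial x}(\mu_s,\tilde X_s)$ (i.e.\ $\psi=\partial H/\partial x\in{\mathbb C}^{1,1}$) into~\eqref{eqzetx}, expanding $\int_{(0,t]}Z_s\circ\,{\rm d}\tilde X_s$ by~\eqref{eqinfv} with ``$g$''$=\tilde f$, and subtracting, one obtains
\[
\int_{(0,t]}\frac{\partial H}{\partial x}(\mu_s,\tilde X_s)[\tilde f_1'(\mu_s,\tilde Y_s)-\sigma(\tilde X_s)]\circ\,{\rm d}\mu_s=\int_{(0,t]}\frac{\partial H}{\partial x}(\mu_s,\tilde X_s)\,b(\tilde X_s,s)\,{\rm d}s-\int_{(0,t]}\frac{\partial H}{\partial x}(\mu_s,\tilde X_s)\,\tilde f_2'(\mu_s,\tilde Y_s)\,{\rm d}\tilde Y_s,
\]
whose left side is exactly the $\circ\,{\rm d}\mu$ integral in the preceding display. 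Substituting, the $\circ\,{\rm d}\mu$ and ${\rm d}\tilde Y$ integrals cancel, leaving $\bar Y_t=H(0,X_0)+\int_0^t\frac{\partial H}{\partial x}(\mu_s,\tilde X_s)\,b(\tilde X_s,s)\,{\rm d}s$; since $\tilde X_s=F(\mu_s,\bar Y_s)$ this is exactly~\eqref{eqsolx} for $\bar Y$, so $\bar Y=Y$ by the uniqueness already proved, whence $\tilde X_t=F(\mu_t,\bar Y_t)=F(\mu_t,Y_t)=X_t$.

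The routine but delicate bookkeeping is to check that each composite occurring above ($\tilde\psi$, $\Phi$, and the products with $\sigma$, $\frac{\partial H}{\partial x}$, $\tilde f_1'$, $\tilde f_2'$) lies in the class ${\mathbb C}^{1,1}$ or ${\mathbb C}^{2,1}$ required by~\eqref{eqintf}--\eqref{eqinfv}, and to run the Gronwall bound that keeps $Y$ from exploding (where boundedness of $\sigma'$ and linear growth of $b$ are essential). The real obstacle is the uniqueness step: no symmetric integral against $\mu$ can be evaluated in closed form, so the computation has to be arranged — by the choice $\psi=\partial H/\partial x$ in~\eqref{eqzetx} and the cancellation imposed by~\eqref{eqprdh} — precisely so that the single $\circ\,{\rm d}\mu$ term that ever arises is eliminated, reducing everything to the ordinary equation~\eqref{eqsolx}.
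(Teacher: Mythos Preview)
Your proposal is correct and follows essentially the same Doss--Sussmann route as the paper: both verify existence by applying~\eqref{eqinfv} to $X_s=F(\mu_s,Y_s)$ together with~\eqref{eqderf} and the identity $\frac{\partial F}{\partial x}\cdot\frac{\partial H}{\partial x}=1$, and both prove uniqueness by choosing $Z=\partial H/\partial x$ in~\eqref{eqzetx}, combining with~\eqref{eqinfv} applied to $H(\mu_t,f(\mu_t,Y_t))$, and letting~\eqref{eqprdh} cancel the $\circ\,{\rm d}\mu$ contribution. The only differences are cosmetic: you include the Gronwall/global-existence argument for~\eqref{eqsolx} explicitly (the paper defers this to a post-proof remark citing~\cite{karatz}), and in the uniqueness step you invoke~\eqref{eqprdh} while computing $\Phi_1'$ rather than after applying~\eqref{eqzetx}, but the resulting cancellation is identical.
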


\begin{proof} By \eqref{eqinfv}, for $X_s=F(\mu_s,Y_s)$, $Z_s=\psi(\mu_s,X_s)$,  we get
\begin{multline*}
\int_{(0,t]} Z_s\circ\,{\rm d}X_s = \int_{(0,t]} Z_s\frac{\partial F}{\partial  r}(\mu_s,Y_s)\circ\,{\rm d}\mu_s
 + \int_{(0,t]} Z_s \frac{\partial F}{\partial  x}(\mu_s,Y_s)\,{\rm d}Y_s\\
 \stackrel{\eqref{eqderf},\ \eqref{eqsolx}}{=}
\int_{(0,t]} Z_s \sigma\bigl(F(\mu_s,Y_s)\bigr)\circ\,{\rm d}\mu_s
 + \int_{(0,t]} Z_s \frac{\partial F}{\partial  x}(\mu_s,Y_s) \frac{\partial H}{\partial
x} (\mu_s, F(\mu_s,Y_s)) b(F(\mu_s,Y_s),s)\,{\rm d}s.
\end{multline*}
Taking into account that $\frac{\partial F}{\partial x}\cdot \frac{\partial H}{\partial  x}=1$, we arrive to~\eqref{eqzetx}.

Now, we will prove the uniqueness of the solution $X$. Take in~\eqref{eqzetx} $Z_t=\frac{\partial H}{\partial  x}(\mu_t,X_t)$ one obtains
\begin{multline}
\int_0^t \frac{\partial H}{\partial  x} (\mu_s, X_s)\circ\,{\rm d}X_s
\stackrel{\eqref{eqzetx}}{=}   \int_0^t \frac{\partial H}{\partial  x} (\mu_s,
X_s)\sigma(X_s)\circ\,{\rm d}\mu_s+\int_0^t  \frac{\partial H}{\partial  x} (\mu_s, X_s) b(X_s,s)\,{\rm d}s\\
\stackrel{\eqref{eqprdh}}{=}  -\int_0^t \frac{\partial H}{\partial  r} (\mu_s, X_s)\circ\,{\rm d}\mu_s+\int_0^t \frac{\partial
H}{\partial  x} (\mu_s, X_s) b(X_s,s)\,{\rm d}s\, .\label{eqitoy}
\end{multline}
Using~\eqref{eqinfv} for $Y_t=H(\mu_t,X_t)=H(\mu_t,f(\mu_t,Y_t))$ we get
\begin{multline*}
Y_t=Y_0+\int_{(0,t]} \Bigl(\frac{\partial H}{\partial  r} (\mu_s, X_s)+\frac{\partial H}{\partial  x} (\mu_s, X_s) f_1' (\mu_s, Y_s)\Bigr)\circ\,{\rm d}\mu_s
+\int_{(0,t]} \frac{\partial H}{\partial  x} (\mu_s, X_s)f_2'(\mu_s, Y_s)\,{\rm d}Y_s\\
=Y_0+\int_{(0,t]} \frac{\partial H}{\partial  r} (\mu_s, X_s)\circ\,{\rm d}\mu_s
+\int_{(0,t]} \frac{\partial H}{\partial  x} (\mu_s, X_s)\circ\,{\rm d}X_s
\stackrel{\eqref{eqitoy}}{=}Y_0+\int_0^t \frac{\partial
H}{\partial x} (\mu_s, X_s) b(X_s,s)\,{\rm d}s,
\end{multline*}
that coincides with~\eqref{eqsolx}.
\end{proof}

Note that existence and uniqueness of a solution to~\eqref{eqsolx} follows from the classical theory of differential equations. Detailed calculations may be found in Section~5.2~D of~\cite{karatz}.

\section{Some counterexamples}
\label{sccoun}

\begin{example}\label{exquad} (Continuous $\mu_{t}$ is not a finite quadratic variation process.)

We will show that finite quadratic variation approach  (see~\cite{rusval00}) can not be applied for SMs.

Let $\tilde{\varepsilon}_i$, $i\ge 1$, be independent Bernoulli random variables (${\sf P}\{\tilde{\varepsilon}_i=1\}={\sf P}\{\tilde{\varepsilon}_i=-1\}=1/2$), $T_1>2\pi$, and for a Borel subset ${\sf A}\subset [0,T_1]$ we put
\[
\mu({\sf A})=\sum_{i=1}^{\infty} \alpha_i{\tilde{\varepsilon}_i} \int_{\sf A}  \cos{i}t\,{\rm d}t.
\]
Here each $\alpha_i\in \{0,1\}$, but their exact values we will choose later.

For each Borel set ${\sf A}\subset[0,T_1]$ this series converges  a.s. because
\[
\sum_{i=1}^{\infty} \Bigl(\int_{\sf A}  \cos{i}t\,{\rm d}t\Bigr)^2<+\infty
\]
and $\mu$ is a SM by Theorem~\ref{thniko}.

Note that the paths of the process
\[
\mu_{s}=\mu((0,s])= \sum_{i=1}^{\infty}\alpha_i \tilde{\varepsilon}_i \frac{\sin is}{i},
\]
are continuous. Moreover,
\[
\Bigl(\sum_{2^j\le i< 2^{j+1}} \frac{\alpha_i}{i^2} \Bigr)^{1/2}=O(2^{-j/2}),
\]
and by Theorem 7.3 of~\cite{kahane} the paths of~$\mu_{s}$ are H\"{o}lder continuous with exponent $\gamma$ for any $\gamma<1/2$.

Now we consider the quadratic variation of $\mu_{s}$ on $[0,2\pi]$. By straightforward calculations we obtain
\[
\int_{(0,{2\pi}]}\frac{(\mu_{s+\varepsilon}-\mu_{s})^2}{\varepsilon}\,{\rm d}s=
4\pi \sum_{i=1}^{\infty} \alpha_i \frac{\sin^2({i}\varepsilon/2)}{i^2 \varepsilon}:= 4\pi f(\varepsilon).
\]

Note that for each finite set $D$
\begin{equation}\label{eqfind}
\sum_{i\in D} \alpha_i \frac{\sin^2({i}\varepsilon/2)}{i^2 \varepsilon}\to 0,\quad \varepsilon\to 0.
\end{equation}

We will use the equality
\[
\sum_{i=1}^{\infty} \frac{\sin^2({i}\varepsilon/2)}{i^2 \varepsilon}=\frac{2\pi-\varepsilon}{8},
\]
that follows from Parseval's identity for the function $f(x)={\mathbf 1}_{[-\varepsilon/2,\varepsilon/2]}(x)$ on $[-\pi,\pi]$.

We will define intervals of positive integers $[m_j,n_j]$. Set $\alpha_i=1$ if $i\in\cup_{j\ge 1} [m_j,n_j]$ and $\alpha_i=0$ else.

Put $m_1=1$, $\varepsilon_1=1$ and take $n_1$ such that
\[
\sum_{i\in [m_1, n_1]} \frac{\sin^2({i}\varepsilon_1/2)}{i^2 \varepsilon_1}>\frac{1}{2}.
\]
Put $D_1=[m_1, n_1]$, so $f(\varepsilon_1)>1/2$.

Using \eqref{eqfind} we take $\varepsilon_2<\varepsilon_1$ so that
\[
\sum_{i\in [m_1, n_1]} \frac{\sin^2({i}\varepsilon_2/2)}{i^2 \varepsilon_2}<\frac{1}{8} .
\]

We take $m_2$ so that
\[
\sum_{i\ge m_2} \frac{\sin^2({i}\varepsilon_2/2)}{i^2 \varepsilon_2}<\frac{1}{8} ,
\]
so $f(\varepsilon_2)<1/4$.

Take $\varepsilon_3<\varepsilon_2$ such that
\[
\sum_{i< m_2} \frac{\sin^2({i}\varepsilon_3/2)}{i^2 \varepsilon_3}<\frac{2\pi-1}{8}-\frac{1}{2} ,
\]
then
\[
\sum_{i\ge m_2} \frac{\sin^2({i}\varepsilon_3/2)}{i^2 \varepsilon_3}>\frac{1}{2}.
\]
For given $\varepsilon_3$ we choose $n_2>m_2$ so that
\[
\sum_{i\in [m_2, n_2]} \frac{\sin^2({i}\varepsilon_3/2)}{i^2 \varepsilon_3}>\frac{1}{2}.
\]

We repeat this procedure, and obtain $f(\varepsilon_{2k-1})>1/2$, $f(\varepsilon_{2k})<1/4$. Obviously, we can choose $\varepsilon_n\downarrow 0$. Therefore, for $f$  constructed in this way the limit $\lim_{\varepsilon\to 0} f(\varepsilon)$ does not exist.
\end{example}

\begin{example} ($\lim_{n\to\infty}\sum_{k=1}^{j_n} \mu^2(\Delta_{kn})$ does not exist.)

This example will demonstrate that non-anticipated integral as
\[
{\rm p}\lim_{n\to\infty}\sum_{k=1}^{j_n}f(\mu_{t_{k-1}^n})\,\mu\bigl(\Delta_{kn}\bigr)
\]
can not be properly defined for SMs.

As in the previous example, we will construct a SM of the kind
\[
\mu({\sf A})=\sum_{i=1}^{\infty} \alpha_i{\tilde{\varepsilon}_i} \int_{\sf A}  \cos{i}t\,{\rm d}t,\quad {\sf A}\subset [0,2\pi],
\]
where $\tilde{\varepsilon}_i$, $i\ge 1$, are independent Bernoulli random variables, and $\alpha_i\in \{0,1\}$.

We consider
\[
\Delta_{kn}= \bigl(2^{-n}(k-1)\cdot 2\pi, 2^{-n}k\cdot 2\pi\bigr],\quad 1\le k\le 2^n,\quad n\ge 1.
\]

So we infer
\begin{multline*}
S_n:=\sum_{k=1}^{2^n} \mu^2(\Delta_{kn})= \sum_{k=1}^{2^n} \Bigl(\sum_{i=1}^{\infty} \alpha_i\tilde{\varepsilon}_i\int_{\Delta_{kn}}  \cos it\,{\rm d}t\Bigr)^2
=\sum_{k=1}^{2^n} \Bigl(\sum_{i=1}^{\infty} \alpha_i\tilde{\varepsilon}_i \frac{2\sin i 2^{-n}\pi\cdot\cos i 2^{-n}(2k-1)\pi}{i}\Bigr)^2\\
= 4\sum_{k=1}^{2^n} \sum_{1\le i,j<\infty} \alpha_i\tilde{\varepsilon}_i
 \frac{\sin i 2^{-n}\pi\cdot\cos i 2^{-n}(2k-1)\pi}{i}\alpha_j\tilde{\varepsilon}_j
 \frac{\sin j 2^{-n}\pi\cdot\cos j 2^{-n}(2k-1)\pi}{j}\\
 = 2\sum_{1\le i,j<\infty} \frac{\alpha_i\tilde{\varepsilon}_i\alpha_j\tilde{\varepsilon}_j}{ij} \sin i 2^{-n}\pi\cdot \sin j 2^{-n}\pi\sum_{k=1}^{2^n} \bigl(\cos (i-j) 2^{-n}(2k-1)\pi +\cos (i+j) 2^{-n}(2k-1)\pi\bigr).
\end{multline*}

We have
\begin{eqnarray*}
\sum_{k=1}^{2^n} \cos (i-j) 2^{-n}(2k-1)\pi= 0 \quad {\rm for}\quad (i-j)2^{-n}\not\in{\mathbb Z},\\
\sum_{k=1}^{2^n} \cos (i+j) 2^{-n}(2k-1)\pi= 0 \quad {\rm for}\quad (i+j)2^{-n}\not\in{\mathbb Z}.
\end{eqnarray*}

If $\alpha_i=1$ for $2^{n-2}\le i< 2^{n-1}$ and $\alpha_i=0$ for $i\ge 2^{n-1}$, we get
\begin{equation}\label{eqmukn}
S_n \ge 2\sum_{2^{n-2}\le i< 2^{n-1}} \frac{{2^n}}{i^2} \sin^2 (i 2^{-n}\pi)
\ge 8 \sum_{2^{n-2}\le i< 2^{n-1}} 2^{-n}= 2
\end{equation}
(here we have used the estimate $\sin x\ge x(2/\pi)$, $0\le x\le \pi/2$).

We will define an increasing sequence of positive integers $n_j$, set $\alpha_i=1$ if $i\in\cup_{j\ge 1} [2^{n_j-2},2^{n_j-1}-1]$ and $\alpha_i=0$ else.

In this case for $n>n_j$ we get
\begin{eqnarray*}
S_n  = \sum_{k=1}^{2^n} \Bigl(\sum_{1\le i<2^{n_j-1}} \alpha_i\tilde{\varepsilon}_i \int_{\Delta_{kn}}  \cos {i}t\, {\rm d}t+\sum_{i\ge 2^{n_{j+1}}} \alpha_i\tilde{\varepsilon}_i \int_{\Delta_{kn}}  \cos {i}t\,{\rm d}t\Bigr)^2\\
\le 2\sum_{k=1}^{2^n} \Bigl(\sum_{1\le i<2^{n_j-1}} \alpha_i\tilde{\varepsilon}_i \int_{\Delta_{kn}}  \cos {i}t\, {\rm d}t\Bigr)^2 +2\sum_{k=1}^{2^n} \Bigl(\sum_{i\ge 2^{n_{j+1}}} \alpha_i\tilde{\varepsilon}_i \int_{\Delta_{kn}}  \cos {i}t\, {\rm d}t\Bigr)^2
=:2A+2B
\end{eqnarray*}
As in~\eqref{eqmukn}, for fixed $n_j$ we have
\[
A=2\sum_{1\le i<2^{n_j-1}} \frac{\alpha_i\cdot {2^n}}{i^2} \sin^2 (i 2^{-n}\pi)\to 0,\quad n\to\infty,
\]
and $A<1/4$ for $n:=\tilde{n}_j$ large enough.

For fixed $\tilde{n}_j$ we can find large $n_{j+1}$ so that $B<1/4$, then $S_{\tilde{n}_j}<1$.
From other side, $S_{{n}_j}\ge 2$. Therefore, $\lim_{n\to\infty} S_n$ does not exist.

Note that by Theorem 7.3 of~\cite{kahane} the paths of~$\mu_{s}$ are H\"{o}lder continuous with exponent $\gamma$ for any $\gamma<1/2$ (as in Example~\ref{exquad}).

\end{example}

\section*{Acknowledgments}

The author acknowledges the support of Alexander von Humboldt Foundation under grant 1074615, and thanks
Prof. M.~Z\"{a}hle for fruitful discussions during the preparation of this paper.

\end{document}